\newtheorem{thm}{Theorem}[section]
\newtheorem{lem}[thm]{Lemma}
\newtheorem{prop}[thm]{Proposition}
\theoremstyle{definition}
\newtheorem{defn}[thm]{Definition}
\theoremstyle{remark}
\newtheorem{rem}[thm]{Remark}
\numberwithin{equation}{section}
\renewcommand{\t}{a}
\newcommand{\abs}[1]{\left\vert#1\right\vert}
\newcommand{\abslv}[1]{\abs{#1}_{\localF}}
\newcommand{\absvK}[1]{\abs{#1}_\K}
\newcommand{\set}[1]{\left\{#1\right\}}
\newcommand{\To}{\longrightarrow}
\newcommand{\abstinu}[1]{\abs{t_{#1}}_{\localF}^{\subII{s}{#1}{\nu}}}
\newcommand{\eqi}{}
\newcommand{\eqii}{}
\newcommand{\eqiii}{}
\newcommand{\eqiiii}{}
\newcommand{\eqiiiii}{}
\newcommand{\eqiiiiii}{}
\newcommand{\eqiiiiiii}{}
\newcommand{\eqiiiiiiii}{}
\newcommand{\eqix}{}
\newcommand{\eqx}{}
\newcommand{\comi}{}
\newcommand{\comii}{}
\newcommand{\comiii}{}
\newcommand{\comiiii}{}
\newcommand{\comiiiii}{}
\newcommand{\comiiiiii}{}
\newcommand{\comiiiiiii}{}
\newcommand{\comiiiiiiii}{}
\newcommand{\comix}{}
\newcommand{\comx}{}
\newcommand{\seteqi}[2][]{\renewcommand{\comi}{#1}\renewcommand{\eqi}{#2}}
\newcommand{\seteqii}[2][]{\renewcommand{\comii}{#1}\renewcommand{\eqii}{#2}}
\newcommand{\seteqiii}[2][]{\renewcommand{\comiii}{#1}\renewcommand{\eqiii}{#2}}
\newcommand{\seteqiiii}[2][]{\renewcommand{\comiiii}{#1}\renewcommand{\eqiiii}{#2}}
\newcommand{\seteqiiiii}[2][]{\renewcommand{\comiiiii}{#1}\renewcommand{\eqiiiii}{#2}}
\newcommand{\seteqiiiiii}[2][]{\renewcommand{\comiiiiii}{#1}\renewcommand{\eqiiiiii}{#2}}
\newcommand{\seteqiiiiiii}[2][]{\renewcommand{\comiiiiiii}{#1}\renewcommand{\eqiiiiiii}{#2}}
\newcommand{\seteqiiiiiiii}[2][]{\renewcommand{\comiiiiiiii}{#1}\renewcommand{\eqiiiiiiii}{#2}}
\newcommand{\seteqix}[2][]{\renewcommand{\comix}{#1}\renewcommand{\eqix}{#2}}
\newcommand{\seteqx}[2][]{\renewcommand{\comx}{#1}\renewcommand{\eqx}{#2}}
\newcommand{\stackeq}[1]{\stackrel{#1}{=}}
\newcommand{\eqcomii}{\stackeq{\comii}}
\newcommand{\Showeqiiiiii}{\begin{eqnarray*}
   \eqi & \eqcomii & \eqii \\
   \comiii & = & \eqiii \\
   \comiiii & = & \eqiiii \\
   \comiiiii & = & \eqiiiii \\
   \comiiiiii & = & \eqiiiiii
\end{eqnarray*}
}
\newcommand{\Showeqiiiiiii}{\begin{eqnarray*}
   \eqi & \eqcomii & \eqii \\
   \comiii & = & \eqiii \\
   \comiiii & = & \eqiiii \\
   \comiiiii & = & \eqiiiii \\
   \comiiiiii & = & \eqiiiiii \\
   \comiiiiiii & = & \eqiiiiiii
\end{eqnarray*}
}
\newcommand{\Showeqx}{\begin{eqnarray*}
   \eqi & \eqcomii & \eqii \\
   \comiii & = & \eqiii \\
   \comiiii & = & \eqiiii \\
   \comiiiii & = & \eqiiiii \\
   \comiiiiii & = & \eqiiiiii \\
   \comiiiiiii & = & \eqiiiiiii \\
   \comiiiiiiii & = & \eqiiiiiiii \\
   \comix & = & \eqix \\
   \comx & = & \eqx
\end{eqnarray*}
}
\newcommand{\showeqiii}[1]{\begin{eqnarray*}
   &&\hspace{#1}\eqi \\
   \comii & = &\eqii \\
   \comiii & = &\eqiii \\
\end{eqnarray*}
}
\newcommand{\showeqiiiii}[1]{\begin{eqnarray*}
   &&\hspace{#1}\eqi \\
   \comii & = &\eqii \\
   \comiii & = &\eqiii \\
   \comiiii & = &\eqiiii \\
   \comiiiii & = &\eqiiiii \\
\end{eqnarray*}
}
\newcommand{\showeqiiiiii}[1]{\begin{eqnarray*}
   &&\hspace{#1}\eqi \\
   \comii & = &\eqii \\
   \comiii & = &\eqiii \\
   \comiiii & = &\eqiiii \\
   \comiiiii & = &\eqiiiii \\
   \comiiiiii & = &\eqiiiiii \\
\end{eqnarray*}
}
\newcommand{\showeqiiiiiii}[1]{\begin{eqnarray*}
   &&\hspace{#1}\eqi \\
   \comii & = &\eqii \\
   \comiii & = &\eqiii \\
   \comiiii & = &\eqiiii \\
   \comiiiii & = &\eqiiiii \\
   \comiiiiii & = &\eqiiiiii \\
   \comiiiiiii & = &\eqiiiiiii \\
\end{eqnarray*}
}
\newcommand{\showeqx}[1]{\begin{eqnarray*}
   &&\hspace{#1}\eqi \\
   \comii & = &\eqii \\
   \comiii & = &\eqiii \\
   \comiiii & = &\eqiiii \\
   \comiiiii & = &\eqiiiii \\
   \comiiiiii & = &\eqiiiiii \\
   \comiiiiiii & = &\eqiiiiiii \\
   \comiiiiiiii & = &\eqiiiiiiii \\
   \comix & = &\eqix \\
   \comx & = &\eqx \\
\end{eqnarray*}
}
\newcommand{\s}{\textbf{s}}
\newcommand{\rank}{\tn{rank}}
\newcommand{\Span}{\tn{Span}}
\newcommand{\SpanF}{\tn{Span}_\F}
\newcommand{\dimF}{\dim_\F}
\newcommand{\meas}{\tn{meas}}
\newcommand{\TT}{\nP}
\newcommand{\TTnuo}{\TT_{\nuo}}
\newcommand{\muo}{\mu_{0,\nu}}
\newcommand{\dog}{dog}
\newcommand{\cat}{cat}
\newcommand{\ThetaNcusp}[2]{\Theta_{\psi}^{(#1)}(#2)}
\newcommand{\ThetaNcuspT}[2]{\Theta_{\psi,\nT}^{(#1)}(#2)}
\newcommand{\Spiiwave}[2]{\widetilde{\tn{Sp}}_{#1}(#2)}
\newcommand{\SpcartOA}[2]{\tn{Sp}_{#1}(\nW)_\fA\times\nO_{#2}(\nV)_\fA}
\newcommand{\Ddots}{\hskip2.2pt\raisebox{-1ex}{$\cdot$}\hskip-1pt\cdot\hskip-1pt\raisebox{1ex}{$\cdot$}}
\newcommand{\transpose}[1]{\hskip0.1pt ^t\hskip-0.3pt#1}
\newcommand{\tr}[1]{\transpose{#1}}
\newcommand{\Braceii}[4]{
   \left\{
   \begin{array}{lc}
     #1 & #2\\
     #3 & #4
   \end{array}
   \right.
}
\newcommand{\mativ}[4]{
\left(\begin{smallmatrix}
  #1 & #2  \\
  #3 & #4
\end{smallmatrix}\right)
}
\newcommand{\rr}[1]{\mathcal{R}(#1)}
\newcommand{\placeF}{\cP_{\F}}
\newcommand{\placeFinf}{\cP_{\F,\infty}}
\newcommand{\Sympform}[2]{\ll#1,#2\gg}
\newcommand{\sympform}[2]{<#1,#2>}
\newcommand{\orthform}[2]{(#1,#2)_{\nT}}
\newcommand{\wi}{w_1}
\newcommand{\wii}{w_2}
\newcommand{\vi}{v_1}
\newcommand{\vii}{v_2}
\newcommand{\viwi}{\vi\otimes\wi}
\newcommand{\viiwii}{\vii\otimes\wii}
\newcommand{\vw}{v\otimes w}
\newcommand{\supp}{\tn{supp}}
\newcommand{\stackit}[3][1]{\genfrac{[}{]}{0pt}{#1}{#2}{#3}}
\newcommand{\inbrackets}[1]{[#1]}
\newcommand{\str}[1]{#1^*}
\newcommand{\commdiagi}[6]{
\begin{array}{rcccl}
     & #1 & \stackrel{#3}{\longrightarrow} & #2 &   \\
  #5 & \downarrow &  & \downarrow & #6 \\
     & #1 & \stackrel{#4}{\longrightarrow} & #2 &
\end{array}
}
\newcommand{\ti}[2]{\abs{t_{#1}}^{#2}}
\newcommand{\locti}[2]{\abs{t_{#1}}^{#2}_{\local \F}}
\newcommand{\prodnu}[2]{\prod_{\nu#1}#2}
\newcommand{\secsize}{\Huge}
\newcommand{\MyTitle}{An Explicit Construction of CAP
Representations of $\GA$ associated to Non-trivial Characters of
Orthogonal Groups~$\HAnoT$}
\newcommand{\MyShortTitle}{CAP Representations of $\GA$
Lifted from a Character of the Orthogonal Group $\HA$}
\newcommand{\therefore}{\Longrightarrow}
\newcommand{\HALF}{\frac{1}{2}}
\newcommand{\mHALF}{\frac{m}{2}}
\newcommand{\rootunity}[1]{\mu_{#1}}
\newcommand{\rowvec}[2]{(#1,\ldots,#2)}
\newcommand{\colvec}[2]{\begin{pmatrix}
  #1 \\
  \vdots \\
  #2
\end{pmatrix}}
\newcommand{\unramchar}{\mu}
\newcommand{\unram}[1][\s]{\unramchar_{#1}}
\newcommand{\ftsection}[3][\normalsize]{{\section{#2}\label{#3}}}
\newcommand{\fatassletter}[1]{\mathbb{#1}}
\newcommand{\fA}{\fatassletter{A}}
\newcommand{\fC}{\fatassletter{C}}
\newcommand{\fF}{\fatassletter{F}}
\newcommand{\fK}{\fatassletter{K}}
\newcommand{\fN}{\fatassletter{N}}
\newcommand{\fR}{\fatassletter{R}}
\newcommand{\fT}{\fatassletter{T}}
\newcommand{\fW}{\fatassletter{W}}
\newcommand{\fZ}{\fatassletter{Z}}
\newcommand{\nA}{\tn{A}}
\newcommand{\nB}{\tn{B}}
\newcommand{\nC}{\tn{C}}
\newcommand{\nD}{\tn{D}}
\newcommand{\nF}{\tn{F}}
\newcommand{\nG}{\tn{G}}
\newcommand{\nH}{\tn{H}}
\newcommand{\nI}{\tn{I}}
\newcommand{\nJ}{\tn{J}}
\newcommand{\nM}{\tn{M}}
\newcommand{\nN}{\tn{N}}
\newcommand{\nO}{\tn{O}}
\newcommand{\nP}{\tn{P}}
\newcommand{\nQ}{\tn{Q}}
\newcommand{\nS}{\tn{S}}
\newcommand{\nT}{\tn{T}}
\newcommand{\nV}{\tn{V}}
\newcommand{\nW}{\tn{W}}
\newcommand{\nX}{\tn{X}}
\newcommand{\nY}{\tn{Y}}
\newcommand{\nZ}{\tn{Z}}
\newcommand{\nc}{\tn{c}}
\newcommand{\nf}{\tn{f}}
\newcommand{\nr}{\tn{r}}
\newcommand{\nt}{\tn{t}}
\newcommand{\nw}{\tn{w}}
\newcommand{\nz}{\tn{z}}
\newcommand{\B}{\tn{B}}
\newcommand{\coolletter}[1]{\mathcal{#1}}
\newcommand{\cA}{\coolletter{A}}
\newcommand{\cP}{\coolletter{P}}
\newcommand{\cO}{\coolletter{O}}
\newcommand{\gnot}{g_0}
\newcommand{\hnot}{h_0}
\newcommand{\Gi}{\nG_1}
\newcommand{\Gii}{\nG_2}
\newcommand{\Giwave}{\widetilde{\nG_1}}
\newcommand{\Giiwave}{\widetilde{\nG_2}}
\newcommand{\gi}{g_1}
\newcommand{\gii}{g_2}
\newcommand{\giii}{g_3}
\newcommand{\cocycle}[3][]{\nc_{#1}(#2,#3)}
\newcommand{\raococycle}[2]{\nc_{\tn{Rao}}(#1,#2)}
\newcommand{\splitting}[1][\nV]{\beta_{#1}}
\newcommand{\Splitting}[2][\nV]{\splitting[#1](#2)}
\newcommand{\SP}[1][2n]{\tn{Sp}_{#1}}
\newcommand{\PGSP}[1]{\tn{PGSp}(#1)}
\newcommand{\GL}[1]{\tn{GL}_{#1}}
\newcommand{\VB}{(\V,\B)}
\newcommand{\VBI}{(\V',\B')}
\newcommand{\go}{g_0}
\newcommand{\bfv}{\mbox{\boldmath$v$}}
\newcommand{\units}[1]{#1^\times}
\newcommand{\uF}{\units{\F}}
\newcommand{\uFsqr}{(\units{\F})^2}
\newcommand{\ufC}{\units{\fC}}
\newcommand{\thetakernel}[5][\phi]{\Sum{\bfv\in\VF{#2}}\funcIII{\Weil{#2}}{#3}{#4}{1}\func{#1}{#5}}
\newcommand{\thetakernelj}[6][\phi]{\Sum{#6}\funcIII{\Weil{#2}}{#3}{#4}{1}\func{#1}{#5}}
\newcommand{\thetakerneljjj}[6][\phi]{\Sum{#6}\funcII{\Weilj}{#3}{#4}\func{#1}{#5}}
\newcommand{\thetakerneli}[7][\phi]{\Sum{\bfv\in\VF{#2}}\funcIII{\Weil{#2}}{#3}{#4}{1}\funcIII{\Weil{#2}}{#6}{#7}{1}\func{#1}{#5}}
\newcommand{\thetakernelii}[6][\phi]{\Sum{\bfv\in\VF{#2}}#3\funcIII{\Weil{#2}}{#5}{#6}{1}\func{#1}{#4}}
\newcommand{\ACHAR}[2][]{\psi_{#1}(\half\tn{tr}(#2))}
\newcommand{\invACHAR}[2][]{\inv{\psi_{#1}}(\half\tn{tr}(#2))}
\newcommand{\achar}[1][\nC]{\psi_{#1}}
\newcommand{\Achar}[2][\nC]{\achar[#1](#2)}
\newcommand{\invAchar}[2][\nC]{\inv{\psi_{#1}}(#2)}
\newcommand{\Gr}{\tn{Gr}}
\newcommand{\F}{\fF}
\newcommand{\K}{\fK}
\newcommand{\Fstr}{\str{\F}}
\newcommand{\Lmana}[2]{#2 \backslash #1}
\newcommand{\Rmana}[2]{#1 / #2}
\newcommand{\PK}{\wp_\K}
\newcommand{\OF}{\cO_\F}
\newcommand{\OK}{\cO_\K}
\newcommand{\locOF}{\cO_{\localF}}
\newcommand{\genPK}{\bar{\omega}}
\newcommand{\ResFieldK}{\Rmana{\OK}{\PK}}
\newcommand{\SpWW}{\tn{Sp}(\fW)}
\newcommand{\SpW}{\tn{Sp}(\nW)}
\newcommand{\OV}{\nO(\nV)}
\newcommand{\SpWwave}{\widetilde{\tn{Sp}}(\nW)}
\newcommand{\SpWAwave}{\widetilde{\tn{Sp}_\fA}(\nW)}
\newcommand{\SpVWAwave}{\widetilde{\tn{Sp}_\fA}(\nV\tensor\nW)}
\newcommand{\SpWlocFwave}{\widetilde{\tn{Sp}}(\local{\nW})}
\newcommand{\SpWlocF}{\tn{Sp}(\local{\nW})}
\newcommand{\SpWA}{\tn{Sp}_\fA(\nW)}
\newcommand{\OVA}{\tn{O}_\fA(\nV)}
\newcommand{\RaoRep}[1]{\tn{r}(#1)}
\newcommand{\Hgp}{\mathbf{H}(\nW)}
\newcommand{\sub}[2]{#1_{#2}}
\newcommand{\subII}[3]{\sub{#1}{#2,#3}}
\newcommand{\map}[3]{#1~\colon~#2~\To~#3}
\newcommand{\embedmap}[3]{#1~\colon~#2~\hookrightarrow~#3}
\newcommand{\isomap}[3]{#1~\colon~#2~\stackrel{\cong}{\To}~#3}
\newcommand{\eqdef}{\stackrel{\tn{def}}{=}}
\newcommand{\defset}[2]{ \left \{ #1\mid#2 \right \}}
\newcommand{\Gadele}[1]{\sub{#1}{\fA}}
\newcommand{\GAdele}[1]{#1(\fA)}
\newcommand{\func}[2]{#1(#2)}
\newcommand{\funcII}[3]{#1\pair{#2}{#3}}
\newcommand{\funcIII}[4]{#1\orderedIII{#2}{#3}{#4}}
\newcommand{\tn}[1]{\textnormal{#1}}
\newcommand{\HH}[1][2n]{\tn{O}_{#1,\tn{T}}}
\newcommand{\GG}[1][4n]{\tn{Sp}_{#1}}
\newcommand{\Adelemana}[1]{ \Lmana {\GAdele{#1}}  {\func{#1}{\F}}  }
\newcommand{\HAoverHF}{\Adelemana{\HH}}
\newcommand{\GAoverGF}{\Adelemana{\GG}}
\newcommand{\ZAoverZF}{\Adelemana{\nZ}}
\newcommand{\loc}[1]{#1_{\localF}}
\newcommand{\MF}[1]{\nM_{#1}(\F)}
\newcommand{\HA}[1][2n]{\GAdele{\HH[#1]}}
\newcommand{\ZA}{\GAdele{\nZ}}
\newcommand{\HAnoT}{\GAdele{\tn{O}_{2n}}}
\newcommand{\GA}[1][4n]{\GAdele{\GG[#1]}}
\newcommand{\HF}{\HH(\F)}
\newcommand{\HFm}{\HH[m](\F)}
\newcommand{\locHF}{\HH(\localF)}
\newcommand{\locHFo}{\HH(\localo{\F})}
\newcommand{\locGF}{\GG(\localF)}
\newcommand{\locGFo}{\GG(\localo{\F})}
\newcommand{\locBF}{\nB(\localF)}
\newcommand{\IN}{\subseteq}
\newcommand{\INarr}{\hookrightarrow}
\newcommand{\isin}[2]{#1\in#2}
\newcommand{\pair}[2]{(#1,#2)}
\newcommand{\orderedIII}[3]{(#1,#2,#3)}
\newcommand{\inv}[1]{#1^{-1}}
\newcommand{\irr}[1]{\tn{Irr}(#1)}
\newcommand{\V}[1][]{\sub{\tn{V}}{#1}}
\newcommand{\VF}[1]{\V[\F]^{#1}}
\newcommand{\Int}[1]{\int_{#1}}
\newcommand{\half}{\tfrac{1}{2}}
\newcommand{\D}[2][]{\tn{d}_{#1}#2}
\newcommand{\XInt}[3]{\Int{#1}#2\D{#3}}
\newcommand{\locXInt}[3]{\Int{#1}#2\D[\nu]{#3}}
\newcommand{\ZInt}[1]{\Int{\ZAoverZF}#1\D{z}}
\newcommand{\HInt}[1]{\Int{\HAoverHF}#1\D{h}}
\newcommand{\thetafunc}[2][\phi]{\theta_{\psi}^{(#2)#1}}
\newcommand{\thetafuncj}[2][\phi]{\theta_{\psi}^{#1}}
\newcommand{\thetafuncT}[2][\phi]{\theta_{\psi,\nT}^{(#2)#1}}
\newcommand{\thetaintT}[1][\phi]{\Int{\HAoverHF}\funcIII{\thetafuncT[#1]{2n}}{g}{h}{1}\func{\OurChar}{h}\D{h}}
\newcommand{\thetaintkT}[2][\phi]{\Int{\HAoverHF}\funcIII{\thetafuncT[#1]{#2}}{g}{h}{1}\func{\OurChar}{h}\D{h}}
\newcommand{\genthetaintT}[4][\phi]{\Int{\HAoverHF}\funcIII{\thetafuncT[#1]{2n}}{#2}{#3}{1}\func{\OurChar}{#4}\D{h}}
\newcommand{\Sum}[1]{\sub{\sum}{#1}}
\newcommand{\locWeil}[1]{\omega_{\localpsi,\nT}^{(#1)}}
\newcommand{\locWeilo}[1]{\omega_{\localo{\psi},\nT}^{(#1)}}
\newcommand{\weilj}{\omega_{\psi}}
\newcommand{\Weil}[1]{\omega_{\psi,\nT}^{(#1)}}
\newcommand{\Weilj}{\omega_{\psi}}
\newcommand{\glbSchwartz}{\nS(\nV_{\fA}^{2n})}
\newcommand{\glbSchwartzk}[1]{\nS(\nV_{\fA}^{#1})}
\newcommand{\OurChar}{\xi}
\newcommand{\locOurChar}{\local{\OurChar}}
\newcommand{\locOurCharo}{\localo{\OurChar}}
\newcommand{\OurDualPair}[1][4n]{(\HA,\GA[#1])}
\newcommand{\OurDualPairk}[1]{(\HA,\GAk{k})}
\newcommand{\local}[1]{\sub{#1}{\nu}}
\newcommand{\localo}[1]{\sub{#1}{\nu_0}}
\newcommand{\nuo}{\nu_0}
\newcommand{\localpihat}{\local{\pihat}}
\newcommand{\localpihato}{\localo{\pihat}}
\newcommand{\localpi}{\local{\pi}}
\newcommand{\localpsi}{\local{\psi}}
\newcommand{\localphi}{\local{\phi}}
\newcommand{\localphinot}{\local{\phi}^0}
\newcommand{\localmu}{\local{\mu}}
\newcommand{\localsmu}{\mu_{\local{\s}}}
\newcommand{\localtau}{\local{\tau}}
\newcommand{\localF}{\local{\F}}
\newcommand{\localB}{\local{\nB}}
\newcommand{\localoF}{\localo{\F}}
\newcommand{\pivarphi}{\varphi_{\pi}}
\newcommand{\pivarphibar}{\overline{\varphi_{\pi}}}
\newcommand{\varphibar}{\overline{\varphi}}
\newcommand{\updownmiddle}[4]{#1^{#2}_{#3}#4}
\newcommand{\Ind}[3]{\updownmiddle{\textnormal{Ind}}{#1}{#2}{#3}}
\newcommand{\jordholder}[2]{\onetwon{\subseteq} {\V[0]} {\V[1]}
{\V[n]}}
\newcommand{\exactseqi}[7]{#1\longrightarrow#2\stackrel{#6}{\longrightarrow}#3\stackrel{#7}{\longrightarrow}#4\longrightarrow#5}
\newcommand{\onetwon}[4]{#2#1#3#1\cdots#1#4}
\newcommand{\onen}[3]{#2#1\cdots#1#3}
\newcommand{\hilbsymbI}[3]{\pair{#1}{#2}_{#3}}
\newcommand{\lochilbsymb}[2]{\hilbsymbI{#1}{#2}{\local{\F}}}
\newcommand{\hilbsymb}[2]{\hilbsymbI{#1}{#2}{\F}}
\newcommand{\hasse}{\tn{h}_\F}
\newcommand{\INPROD}[2]{\Int{\GAoverGF}#1\overline{#2}\D{g}}
\newcommand{\chglinespacing}[1]{}
\newcommand{\diag}{\tn{diag}}
\newcommand{\aut}{\tn{Aut}}
\newcommand{\pihat}{\widehat{\pi}}
\newcommand{\pibar}{\overline{\pi}}
\newcommand{\LtwoGAF}{\LtwoGAF}
\newcommand{\Vpihat}{V_{\widehat{\pi}}}
\newcommand{\Hom}[3]{\funcII{\sub{\tn{Hom}}{#1}}{#2}{#3}}
\newcommand{\cardprod}[2]{#1\times#2}
\newcommand{\textdf}[1]{\textbf{#1}}
\def\tensorF{\otimes_{\F}}
\def\tensor{\otimes}
\def\restensor{\otimes'}
\begin{document}

\chglinespacing{1.5}

\begin{titlepage}



\thispagestyle{empty}
\end{titlepage}




\pagenumbering{roman}
\newpage

\newpage


\newpage


\begin{center}
\chglinespacing{0.5} \MyTitle
\end{center}
\vspace{0.25cm}
\begin{center}
Ron Erez
\end{center}
\vspace{0.25cm}
\subsection*{Abstract}
 {\small In \cite{PS}, Piatetski-Shapiro the concept of CAP representations was introduced, elucidating the Saito-Kurokawa representations of $\PGSP{4}$. In this paper we present a family of CAP representations for the group $\GA$ through the application of the theta correspondence and Howe duality to the reductive dual pair $\OurDualPair$. The construction involves constructing a non-trivial automorphic character of $\HA$, lifting it to an irreducible cuspidal automorphic representation $\pi=\restensor_{\nu}\localpi$ of $\GA$, and providing a detailed characterization of the representations $\localpi$ of $\locGF$ at almost all places $\nu$.}

\vspace{1cm}
\noindent
\small{\emph{Keywords:} Theta lift; Automorphic Characters; CAP Representations}

\tableofcontents

\ftsection[\secsize]{Introduction}{sec:intro} 

\chglinespacing{1.5}In this work we will be concerned with a
certain subclass of cuspidal automorphic representations, that is,
our main concern will be \textbf{C}uspidal representations
\textbf{A}ssociated to a
proper \textbf{P}arabolic subgroup, i.e. CAP representations. 

\begin{defn}
Let G be a reductive group defined over a number field $\F$. Let P
be a proper parabolic subgroup of G defined over $\F$ with Levi
decomposition
$$\nP=\nM\nN$$ where $\nM$ is the Levi subgroup of P and $\nN$ is
the unipotent radical of P. Let
$\tau~\cong~\restensor_{\nu}\localtau$ be an irreducible,
automorphic, cuspidal representation of $\Gadele{\nM}$. Extend
$\tau$ to $\Gadele{\nP}~=~\Gadele{\nM}\Gadele{\nN}$ by letting
$\Gadele{\nN}$ act trivially. Then we say that an irreducible,
automorphic, cuspidal representation
$\pi\cong\restensor_{\nu}\localpi$ of $\Gadele{\nG}$ is a
\textdf{CAP representation} with respect to $\nP$ and $\tau$ if
for almost all places $\nu$, $\localpi$ is a constituent of
$\Ind{\loc{\nG}}{\loc{\nP}}{\localtau}$, i.e.
$$\localpi\cong\tn{ an irreducible subqoutient of }\Ind{\loc{\nG}}{\loc{\nP}}{\localtau}$$
\end{defn}

The notion of a CAP representation was first introduced by
Piatetski-Shapiro in \cite{PS}. Such cuspidal automorphic
representations apparently seem to contradict the generalized
Ramanujan-Petersson conjecture. The first examples of CAP
representations were given for the group $\tn{GSp}_4(\fA)$, by
Piatetski-Shapiro, in \cite{PS} where he reinterpreted the
Saito-Kurokawa representations. Later on, Soudry completed the
classification of CAP representations of
$\tn{GSp}_4(\fA)$~\cite{S2}.

\par
It may seem that these representations are rare, since it follows
from the work of Jacquet-Shalika \cite{JSH} that there are no CAP
representations of the general linear group. However, whether or not
such representations are`"rare" or "pathological" is open to debate
since much work has been done in this area. For instance, in
\cite{Ral-Sch1} and \cite{GGJ}, families of CAP automorphic
representations on the split exceptional group of type $G_2$ have
been constructed. In \cite{GJR}, a family of CAP representations of
a split group G of type $D_4$ is studied and constructed.

\par
In this work we construct a family of CAP representations of
$\GA$, which arise as theta lifts of non-trivial automorphic
characters of orthogonal groups $\HA$ associated to anisotropic
forms T.

\par
Here is a quick overview of this work. In
section \ref{sec:weil} we introduce the Weil representation and
recall its construction. In subsection \ref{subsec:Schrod} we
realize the Schr\"{o}dinger model of the Weil representation (Lemma
\ref{lem:WeilRep}). In subsection \ref{subsec:globalweil} we
describe the global Weil representation and describe a concrete
realization of $\SpWAwave$ the $2$-fold cover of the symplectic
group over the ad\`{e}le ring $\fA$ using Rao's normalized cocycle
$\raococycle{\cdot}{\cdot}$. Section \ref{sec:thetacorr} is a
recollection of preliminary concepts to be used in Section
\ref{sec:main}. In Subsection \ref{subsec:dualpair} we consider the
dual pair $(\nO(V),\tn{Sp}(W))$ and in Lemma \ref{lem:weilformulas},
we provide formulas for the restriction
$$\Weil{n}\eqdef\Weilj\mid_{\SpcartOA{2n}{2k}}\quad\INarr\quad\Spiiwave{4kn}{\nV\otimes\nW}_\fA$$
Subsections \ref{subsec:thetacorr} and \ref{subsec:howeduality}
provide a brief recap of the theta correspondence and Howe
duality, respectively.

\par
The main results will be proven in Section
\ref{sec:main} where we will construct CAP representations of the
symplectic group $\GA$, by lifting, via the theta-correspondence, a
non-trivial one-dimensional automorphic character $\OurChar$ of
$\HA$, an orthogonal group in $2n$ variables, corresponding to a
symmetric matrix T, over $\F$, which defines an anisotropic
quadratic form. Thus, by a result of reduction theory (See Theorem
\ref{TH:reductiontheory}), $\HAoverHF$ will be compact, therefore
simplifying our subsequent calculations and justifications of
calculations of Fourier coefficients, giving meaning to integrals,
exchanging summation with integration, etc.

\par
We start by considering a sequence of dual pairs
$\OurDualPair[2k]$, where $1\leq k\leq 2n$. By calculating Fourier
coefficients, we show that for every $1\leq k < 2n$, the
theta-lift $\ThetaNcusp{k}{\OurChar}$ is zero and for $k=2n$, it
is non-zero (Theorem \ref{TH:fourier}). It then follows from
results of Rallis (See Theorem \ref{TH:Rallis}) and Moeglin
\cite{M} that the theta lift $\pi\eqdef\ThetaNcusp{2n}{\OurChar}$
is an irreducible cuspidal representation of $\GA$ (Theorem
\ref{TH:IsCAP}). We then, by a result of Flath \cite{F}, may
express $\pi$ as a restricted tensor product
$$\pi\cong\tensor_{\nu}'\localpi$$
where almost all $\localpi$ are unramified representations of
$\locGF$ and in Theorem \ref{TH:ExplicitCon}, we give a detailed
description of the unramified factors $\localpi$ in terms of the
Hilbert symbol and show that $\pi$ is CAP (Theorem
\ref{TH:IsCAP}).

\pagestyle{myheadings}\markright{\MyShortTitle}

\vspace{0.5cm}
\begin{center}
    {\bfseries{Acknowledgements}}
\end{center}
    I extend my gratitude to Professor David Soudry for introducing me to the field of automorphic forms. This paper stems from the guidance provided in my thesis, overseen by Prof. David Soudry, who patiently led me through the process. I sincerely appreciate his continual support and patience. Any errors or inaccuracies in this work are solely my responsibility.
\chglinespacing{1.5} \pagenumbering{arabic} 

\ftsection[\secsize]{Notations and Preliminaries}{sec:back} 

\newcommand{\Jn}{\nJ_n}
\newcommand{\wn}{\nw_n}

\subsection{Notations} Let $\K$ be a locally compact,
nonarchimedean field with ring of integers $\OK$ and maximal ideal
of $\OK$, given by $\PK=\OK\genPK$, where $\genPK\in\PK-\PK^2$. Let
$q\eqdef\abs{\ResFieldK}$ denote the cardinality of the residue
field. Denote by $\absvK{\cdot}$ the absolute value of $\K$,
normalized so that $\abs{\genPK}=q^{-1}$. We fix a Haar measure
$\D{x}$ on $\K$ such that

$$\int_{\OK}\D{x}=1$$

and let

\newcommand{\DS}[1]{\tn{d}^{\times}x#1}

$$\DS=\frac{\D{x}}{\absvK{x}}$$
be the Haar measure on $\units{\K}$.
\par
Let $\fT\eqdef\defset{z\in\fC}{\abs{z}=1}$ and
$\rootunity{n}\IN\ufC$ denote the subgroup of $n$-th roots of unity.
Henceforth, whenever $\F$ is a number field,
we let $\fA=\fA_\F$ denote its ad\`{e}le ring.

\subsection{The Symplectic Group}\label{subsec:symp}
Let $\F$ be an arbitrary field ($\tn{char}(\F)\neq2$). The
symplectic group $\SP(\F)$ is the isometry group of a
non-degenerate, skew-symmetric, bilinear form on $\F^{2n}$. In
\cite[p. 19]{Gos} it is shown that there is "only one" symplectic
group $\SP(\F)$, for every $n\in\fN$, that is

\begin{prop}\label{PROP:GOS}
   Let $\VB,\VBI$ be two symplectic spaces of dimension 2n. Then there
   exists a symplectic isomorphism $\map{\sigma}{V}{V'}$.
\end{prop}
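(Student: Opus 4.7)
The plan is to prove the proposition by constructing a symplectic (Darboux / hyperbolic) basis for each of the two $2n$-dimensional symplectic spaces $(\nV,\nB)$ and $(\nV',\nB')$, and then defining $\sigma$ to be the unique linear map that sends the chosen basis of $\nV$ to the chosen basis of $\nV'$. Because symplectic-basis condition is purely a statement about the values of the form on basis vectors, the map $\sigma$ will automatically preserve the form, and hence be a symplectic isomorphism.

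The heart of the argument is therefore the existence of a symplectic basis, i.e.\ a basis $\{e_1,\dots,e_n,f_1,\dots,f_n\}$ of $\nV$ satisfying $\nB(e_i,e_j)=\nB(f_i,f_j)=0$ and $\nB(e_i,f_j)=\delta_{ij}$. I would prove this by induction on $n$. For the base case $n=1$, pick any nonzero $e_1\in\nV$; by non-degeneracy of $\nB$ there exists $v\in\nV$ with $\nB(e_1,v)\neq 0$, and rescaling $v$ gives $f_1$ with $\nB(e_1,f_1)=1$. Skew-symmetry forces $\nB(e_1,e_1)=\nB(f_1,f_1)=0$, so $\{e_1,f_1\}$ is a symplectic basis.

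For the inductive step, starting from the same $e_1,f_1$, consider the subspace $U=\operatorname{Span}_\F\{e_1,f_1\}$ and its symplectic orthogonal complement
\[
U^{\perp}=\{\, v\in\nV \mid \nB(v,e_1)=\nB(v,f_1)=0\,\}.
\]
A standard computation shows $\nV=U\oplus U^{\perp}$ (any $v\in\nV$ decomposes as $\nB(v,f_1)e_1-\nB(v,e_1)f_1$ plus a vector in $U^{\perp}$), so $\dim U^{\perp}=2n-2$, and the restriction of $\nB$ to $U^{\perp}$ is again non-degenerate and skew-symmetric. By induction $U^{\perp}$ admits a symplectic basis $\{e_2,\dots,e_n,f_2,\dots,f_n\}$, which together with $\{e_1,f_1\}$ gives a symplectic basis of $\nV$.

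Applying the same construction to $(\nV',\nB')$ produces a symplectic basis $\{e_1',\dots,e_n',f_1',\dots,f_n'\}$, and defining $\sigma(e_i)=e_i'$, $\sigma(f_i)=f_i'$ and extending linearly yields the desired symplectic isomorphism: it is a linear bijection by construction, and a direct check on basis pairs shows $\nB'(\sigma(x),\sigma(y))=\nB(x,y)$ for all $x,y\in\nV$. The only nontrivial obstacle is the inductive existence argument; the splitting $\nV=U\oplus U^{\perp}$ requires using non-degeneracy carefully, but this is entirely standard and presents no real difficulty.
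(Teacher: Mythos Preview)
Your argument is correct and is exactly the standard proof via the existence of a symplectic (Darboux) basis. The paper itself does not supply a proof of this proposition; it simply cites \cite[p.~19]{Gos} (de~Gosson's book) for the result, so there is no in-paper argument to compare against. The construction you outline---find $e_1,f_1$ with $\nB(e_1,f_1)=1$, split off the symplectic plane $U=\Span\{e_1,f_1\}$, verify $\nV=U\oplus U^{\perp}$ with $\nB|_{U^{\perp}}$ non-degenerate, and induct---is precisely the classical Gram--Schmidt-type argument one finds in any treatment of symplectic linear algebra, including the cited reference, so your approach is in effect the same as what the paper defers to.
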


We will realize $\SP(\F)$ with respect to
$$\nJ_n=\mativ{}{\wn}{-\wn}{}$$
where
$$\wn=\begin{pmatrix}
        &        &  1    \\
        & \Ddots &       \\
    1   &        &
\end{pmatrix}$$

We let $\SP(\F)$ act on the right on the row space $\F^{2n}$

Then
$$\SP(\F)\eqdef\defset{\sigma\in\tn{GL}_{2n}(\F)}{\transpose{\sigma}\nJ_{2n}\sigma=\nJ_{2n}}$$

\begin{rem}
Recall that a symplectic basis
$\set{e_1,\ldots,e_n,e^*_1,\ldots,e^*_n}$ of $\F^{2n}$ (with
respect to $\sympform{x}{y}=x\nJ_{2n}\transpose{y}$) is a basis
such that
$$\sympform{e_i}{e_j}=\sympform{e_i^*}{e_j^*}=0,\quad\sympform{e_i}{e_j^*}=\delta_{ij},\quad 1\leq i,j\leq n$$
\end{rem}

We will be primarily concerned with the symplectic
group over the ad\`{e}le ring (of a number field) $\GA[2k]$ given by

$$\GA[2k]\eqdef\defset{\sigma\in\tn{GL}_{2k}(\fA)}{\transpose{\sigma}\nJ_{2k}\sigma=\nJ_{2k}}$$

where $\nJ_{2k}$ is identified with its diagonal embedding inside
$\tn{GL}_{2k}(\F)$.

\subsection{The Orthogonal group}\label{subsec:orth}

\par
The orthogonal group $\HFm$ is the isometry group of a
non-degenerate, symmetric, bilinear form, where the bilinear form is
given by the matrix $\nT\in\nM_m(\F)$, which is symmetric and
invertible. Unlike the symplectic case there is no equivalent to
Proposition \ref{PROP:GOS}, that is, there are "many" orthogonal
groups. Note that, for the sake of simplicity, we will primarily focus on orthogonal groups in
an even number of variables although our results do generalize to the odd case.

Thus, for a fixed symmetric, invertible, $2n\times 2n$-dimensional
matrix T over $\F$, we define
$$\HF\eqdef\defset{\sigma\in\tn{GL}_{2n}(\F)}{\transpose{\sigma}\nT\sigma=\nT}$$

Similarly, over the ad\`{e}le ring of a number field $\F$ we define
the orthogonal group, with matrix
$\nT=\transpose{\nT}\in\tn{GL}_{2n}(\F)$, by
$$\HA\eqdef\defset{\sigma\in\tn{GL}_{2n}(\fA)}{\transpose{\sigma}\nT\sigma=\nT}$$

\par
Without loss of generality we may assume that $\nT$ is a diagonal
matrix, since if $\gamma\in\GL{2n}(\F)$ is such that
$\gamma\cdot\nT\cdot\transpose{\gamma}=\nD$ is diagonal, then
$\inv{\gamma}\nO_{2n,\nD}(\localF)\gamma=\locHF$ at all places
$\nu$. Thus
$$\nT=\diag(\t_1,\ldots,\t_{2n}),\quad \t_i\in\uF,\quad1\leq i\leq 2n$$

Again, T is embedded diagonally inside $\tn{GL}_{2n}(\fA)$. Let
$\placeFinf$ be the finite set of Archimedean places of $\F$. We
will assume that $\F$ is such that at at least one place,
$\nuo\in\placeFinf$, the completion $\localoF$ is isomorphic to
$\fR$. Therefore, there exists an isomorphism
$$\isomap{\localo{\nI}}{\localoF}{\fR}$$
and we'll assume that $\t_i\in\F, 1\leq i\leq 2n$
are "positive" in $\localoF$, which means
$$\localo{\nI}(\t_i)>0,\quad\forall\t_i\in\F,\quad 1\leq i\leq 2n$$
\newcommand{\txsqr}[1]{\t_{#1}x^2_{#1}}
Alternatively, we may assume that the the $\t_i$ are "negative" in
$\localoF$, i.e.
$$\localo{\nI}(\t_i)<0,\quad\forall\t_i\in\F,\quad 1\leq i\leq2n$$

Therefore, over $\localoF$,
$$\txsqr{1}+\txsqr{2}+\cdots+\txsqr{2n}=0\quad\iff\quad x_{i}=0,\quad\forall 1\leq i\leq 2n$$
Now a necessary condition for T to represent zero over $\F$ is that
T represents zero at every place $\nu$, therefore T is anisotropic
over~$\F$.

\begin{rem}
   Note that if the homogenous space $\HAoverHF$ were compact then
   subsequent calculation justifications would be simplified. The
   following theorem shows that this is the case in our present
   setting, that is, where T is taken to be anisotropic over
   $\F$, for instance.
\end{rem}

\begin{thm}\label{TH:reductiontheory}
   $\HAoverHF$ is compact if and only if T is anisotropic over $\F$.

   \proof
   This is a result of reduction theory that can be found in \cite[Theorem 4.1.2, p.82]{We3}.
\end{thm}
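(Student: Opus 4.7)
The plan is to view this as the orthogonal-group instance of the classical compactness criterion for adelic quotients of reductive groups: for a reductive algebraic group $\nG$ defined over a number field $\F$, the quotient $\nG(\F)\backslash\nG(\fA)$ is compact if and only if $\nG$ is anisotropic over $\F$, that is, contains no proper parabolic $\F$-subgroup (equivalently, no non-trivial $\F$-rational unipotent element). For $\nG=\HH=\nO_{2n,\nT}$, non-trivial $\F$-parabolic subgroups correspond bijectively to non-zero totally isotropic $\F$-subspaces of $\nT$, so anisotropy of $\HH$ as an algebraic group over $\F$ is equivalent to anisotropy of $\nT$ as a quadratic form. This reduces the theorem to the general criterion.

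For the forward direction ($\nT$ anisotropic $\Rightarrow$ $\HAoverHF$ compact), I would follow the adelic Mahler--Siegel approach. The group $\HA$ acts on $\fA^{2n}$ preserving $\nT$, exhibiting $\HAoverHF$ as the $\HA$-orbit of the canonical lattice $\F^{2n}$ inside the space of adelic lattices. If a sequence $\set{h_n}\subset\HA$ admitted no convergent subsequence modulo $\HF$, the adelic analogue of Mahler's criterion would extract non-zero vectors $v_n\in\F^{2n}$ with $h_n v_n\to 0$ in $\fA^{2n}$; then $\nT$-invariance gives $\nT(v_n)=\nT(h_n v_n)\to 0$ in $\fA$. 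Since $\F$ is discrete in $\fA$, this forces $\nT(v_n)=0$ for large $n$, contradicting the anisotropy hypothesis.

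For the converse ($\nT$ isotropic $\Rightarrow$ $\HAoverHF$ non-compact), I would pick any non-zero isotropic $v\in\F^{2n}$ and apply Witt's extension theorem to complete it to a hyperbolic pair $\set{v,v^*}$ with $\nT(v)=\nT(v^*)=0$ and $\langle v,v^*\rangle=1$. In a basis adapted to the orthogonal decomposition $\F v\oplus\F v^*\oplus(\F v\oplus\F v^*)^{\perp}$, the map $t\mapsto\diag(t,1,\ldots,1,t^{-1})$ defines an $\F$-split one-dimensional torus $\nA\subset\HH$ with $\nA(\F)\backslash\nA(\fA)\cong\F^\times\backslash\fA^\times$. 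The idele class group is non-compact (its quotient by the norm-one ideles is $\fR^\times_{>0}$), and pushing this non-compact direction forward into $\HAoverHF$ via the global idele-norm character obstructs compactness of the ambient quotient.

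The main obstacle is the forward direction, whose rigorous execution demands the full machinery of adelic Siegel domains and Mahler's compactness criterion transferred to the adelic setting. This is precisely the content of \cite[Theorem 4.1.2]{We3} invoked by the paper, and I would cite that reference rather than duplicate its development. The converse, by contrast, is essentially formal once one has Witt extension and the classical non-compactness of the idele class group of a number field.
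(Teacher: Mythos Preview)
Your proposal is correct and aligns with the paper's approach: the paper's entire proof is the one-line citation to \cite[Theorem 4.1.2, p.~82]{We3}, and you likewise reduce the statement to that reference after correctly identifying the orthogonal group as anisotropic over $\F$ precisely when $\nT$ is. Your additional sketch of the Mahler--Siegel argument and the split-torus construction for the converse goes beyond what the paper provides, but it is accurate and adds useful context that the paper omits.
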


\begin{rem}
   One might ask why couldn't we find a finite place such that T
   is anisotropic over the completion at that place, which would
   imply that T is anisotropic over $\fA$. The answer is that
   according to the following theorem, Theorem~\ref{TH:quadrepzero},
   if T has dimension $\geq5$ then it is isotropic.
\end{rem}

\begin{thm}\label{TH:quadrepzero}
   If Q is a quadratic form of rank $n\geq5$ over a local nonarchimedean field $\K$ then Q represents 0.
\end{thm}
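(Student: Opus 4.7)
The plan is to establish the $n=5$ case and bootstrap to higher dimensions, then prove it by combining the classification of quaternary forms over $\K$ with surjectivity of the reduced norm from the quaternion division algebra.

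First I would reduce to rank exactly $5$: if $n \geq 5$, decompose $Q \cong Q' \perp Q''$ with $\dim Q' = 5$, so a nontrivial zero of $Q'$ gives one of $Q$. Then, since $\tn{char}(\K) \neq 2$, diagonalize $Q(x) = \sum_{i=1}^{5} a_i x_i^2$ with $a_i \in \units{\K}$. The form $Q$ represents $0$ nontrivially iff the quaternary form $Q^{\sharp}(x_2,\ldots,x_5) = \sum_{i=2}^{5} a_i x_i^2$ represents $-a_1 \in \units{\K}$. Hence the theorem reduces to the universality statement: every non-degenerate quaternary quadratic form over $\K$ represents every element of $\units{\K}$.

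To prove universality I would split into two cases. (i) If $Q^{\sharp}$ is isotropic, then it contains a hyperbolic plane, and a hyperbolic plane already represents every element of $\K$; in particular it represents $-a_1$. (ii) If $Q^{\sharp}$ is anisotropic, then by the classification of anisotropic quadratic forms over a local nonarchimedean field, $Q^{\sharp}$ is (up to a nonzero scalar) the reduced norm form of the unique quaternion division algebra $\nD$ over $\K$. By local class field theory (equivalently, by a direct Hilbert symbol computation), the reduced norm $\nD^{\times} \to \units{\K}$ is surjective; hence $Q^{\sharp}$ represents every element of $\units{\K}$, in particular $-a_1$.

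The main obstacle is case~(ii): one must establish both the uniqueness (up to scaling) of the anisotropic quaternary form and the surjectivity of the reduced norm. Both facts ultimately rest on the non-degeneracy of the Hilbert symbol $(\cdot,\cdot)_{\K}$ on the finite group $\units{\K}/(\units{\K})^2$ and a case analysis on the residue characteristic. Rather than reproduce these computations here, I would cite a standard reference such as Serre's \emph{A Course in Arithmetic} (Chapter~IV) or Lam's \emph{Introduction to Quadratic Forms over Fields} (Chapter~VI), where both the classification and the surjectivity of the reduced norm for local division algebras are worked out in detail.
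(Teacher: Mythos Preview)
Your outline is correct and is precisely the standard argument; the paper itself gives no argument at all and simply writes ``See \cite{Ser}.'' So there is nothing to compare on the level of proof strategy: you have sketched the proof that Serre gives (reduction to $n=5$, universality of quaternary forms via the hyperbolic/anisotropic dichotomy and surjectivity of the reduced norm), while the paper is content to cite the reference. Your closing remark that you would ultimately cite Serre or Lam for the Hilbert-symbol computations brings you to exactly the same endpoint as the paper.
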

\begin{proof}
   See \cite{Ser}.
\end{proof}

\subsection{Our Non-trivial Character}\label{subsec:setup}

Let $\F$ be a number field. We will construct a non-trivial
character $\OurChar$ of $\HAoverHF$. In section \ref{sec:main}, we
will lift this to a CAP representation of $\GA$.

\par
Let S be a non-empty finite set of places such that $\abs{\nS}$ is
even. In addition, let $\placeFinf\IN\nS$, i.e. S will contain the
(finite) set of Archimedean places. Then for every place $\nu$ of
$\F$ let $\locOurChar$ be the following character of $\locHF$: for
$\nu\in\nS$, $\locOurChar$ is the determinant character and for
$\nu\not\in\nS$, $\locOurChar$ is the trivial character. Define for
$h\in\HA$, $\OurChar(h)=\prod_\nu\locOurChar(\local h)$. This is a
character of $\HA$. It is trivial on $\HF$ since for $\isin{h}{\HF}$
embedded diagonally in $\HA$, i.e $h=(x,x,x,x,x,...)$, we have

\seteqi{\func{\OurChar}{h}}

\seteqii{\prodnu{}{\locOurChar(x)}}

\seteqiii{\prodnu{\in S}{\locOurChar(x)}\prodnu{\notin
S}{\locOurChar(x)}}

\seteqiiii{\prodnu{\in S}{\tn{det}(x)}\prodnu{\notin S}{1}}

\seteqiiiii{(\tn{det}(x))^{\abs{S}}}

\seteqiiiiii[\stackit{\tn{because
}\isin{\tn{det}(x)}{\set{\pm1}}}{\tn{and }\abs{S}\tn{ is even}}]
{1.}

\Showeqiiiiii

\subsection{Cocycles}\label{app:cocycle} 

\label{symb:cocycle} In this section, we recall some basic
definitions regarding cocycles. Let G be a locally compact group.

\begin{defn}\label{def:cocycle} A \textdf{cocycle} c on G (with values in $\ufC$) is a continuous map
$$\map{\nc}{\nG\times \nG}{\ufC},$$
such that for every $g,\gi,\gii,\giii\in\nG$
\begin{enumerate}
  \item
  $\cocycle{\gi\gii}{\giii}\cocycle{\gi}{\gii}=\cocycle{\gi}{\gii\giii}\cocycle{\gii}{\giii}$;
  \item $\cocycle{1}{g}=\cocycle{g}{1}=1$.
\end{enumerate}
\end{defn}

\begin{defn}
The cocycle c is called \textdf{trivial} if there exists a map
$\map{\splitting[]}{\nG}{\ufC}$ such that
$$\cocycle{\gi}{\gii}\Splitting[]{\gi\gii}=\Splitting[]{\gi}\Splitting[]{\gii}\qquad\tn{for all }\gi,\gii\in\nG$$
Such a map $\label{symb:splitting}\splitting[]$ is called a
\textdf{splitting} of c. If no such splitting exists then the
cocycle is said to be \textdf{nontrivial}.
\end{defn}

\begin{defn}
Let $\nc_1, \nc_2$ be two cocycles of G (with values in $\ufC$).
Then $\nc_1$ and $\nc_2$ are called \textdf{equivalent} if there
exists a $\map{\nt}{\nG}{\ufC}$ such that for every
$\gi,\gii\in\nG$ we have
$$\cocycle[1]{\gi}{\gii}=\cocycle[2]{\gi}{\gii}\frac{\nt(\gi)\nt(\gii)}{\nt(\gi\gii)}$$
\end{defn}

\subsection{The Hilbert Symbol and the Hasse Invariant}

Let $\F$ be a local field. Let $\rootunity{n}$ denote the
\label{symb:rtunity} $n^{\tn{th}}$ roots of unity subgroup of $\F$.
For proofs and generalizations see \cite{FV}, \cite{N} and
\cite{Ser}.

\label{symb:hilbsymb}
\begin{defn}
   Let $a,b\in\uF$. Then we define
   $\hilbsymb{a}{b}=1$ if and only if the equation
   $$z^2-ax^2-by^2=0$$ has a non-zero solution $(z,x,y)\in\F^3$,
   otherwise we define $\hilbsymb{a}{b}=-1$. The number
   $\hilbsymb{a}{b}\in\rootunity{2}$ is called the \textdf{Hilbert
   symbol} of $a$ and $b$ relative to $\F$.
\end{defn}

\noindent The following theorem outlines key properties of the Hilbert symbol.

\begin{thm}
Let $\F$ be a local field with $a,b,c\in\uF$. Then the Hilbert
symbol satisfies the following:
\begin{itemize}
  \item $\hilbsymb{ab}{c}=\hilbsymb{a}{c}\hilbsymb{b}{c}$
  \item $\hilbsymb{a}{b}=\hilbsymb{b}{a}$
  \item $\hilbsymb{a}{a}=\hilbsymb{a}{-1}$
  \item The Hilbert symbol is non-degenerate,\\ i.e if for a given
  $a\in\uF$, $\hilbsymb{a}{b}=1,\forall b\in\uF$, then $a\in\uFsqr$
  \item $\hilbsymbI{a}{b}{\fC}=1,\forall a,b\in\ufC$
  \item $\hilbsymb{a}{-a}=\hilbsymb{a}{1-a}=1,\forall a\in\uF,a\neq1$
  \item Let $\F$ be a non-archimedian local field. Then $\hilbsymb{x}{y}=1,\quad\forall x,y\in\units{\OF}$ where
  the characteristic of the residue field $\mathcal{K}_{\F}$ is
  not equal to 2.
\end{itemize}
\end{thm}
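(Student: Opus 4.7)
The plan is to prove the listed properties by combining explicit algebraic identities with the norm-residue interpretation of the Hilbert symbol. The central observation I would establish first is: $\hilbsymb{a}{b}=1$ if and only if either $\isin{a}{\uFsqr}$, or $b$ lies in the image of the norm map $\tn{N}_{\F(\sqrt{a})/\F}\colon \F(\sqrt{a})^\times\to\uF$. This equivalence follows by dividing $z^2-ax^2-by^2=0$ through by $y^2$ when $y\neq0$ and recognizing $z^2-ax^2=\tn{N}(z+x\sqrt{a})$, while the case $y=0$ forces $a\in\uFsqr$. This reformulation turns several of the desired properties into structural statements about norm groups.

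With this in hand I would dispose of the easy items. Symmetry $\hilbsymb{a}{b}=\hilbsymb{b}{a}$ is visible directly from the defining equation. The triviality $\hilbsymbI{a}{b}{\fC}=1$ is immediate since every element of $\ufC$ is a square. The two identities $\hilbsymb{a}{-a}=1$ and $\hilbsymb{a}{1-a}=1$ I would verify by exhibiting explicit non-zero solutions: $(z,x,y)=(0,1,1)$ solves $z^2-ax^2+ay^2=0$, and $(z,x,y)=(1,1,1)$ solves $z^2-ax^2-(1-a)y^2=0$. Granted bimultiplicativity in the second slot, we then obtain $\hilbsymb{a}{a}=\hilbsymb{a}{-a}\hilbsymb{a}{-1}=\hilbsymb{a}{-1}$.

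The core technical step is the bimultiplicativity $\hilbsymb{ab}{c}=\hilbsymb{a}{c}\hilbsymb{b}{c}$, which via the norm-residue interpretation amounts to saying that the image of $\tn{N}_{\F(\sqrt{c})/\F}$ is a subgroup of $\uF$ of index at most two, equivalently that $\{\isin{a}{\uF}\mid\hilbsymb{a}{c}=1\}$ is a subgroup. I would prove this by enumerating the square classes of $\F$: for a non-archimedean local field with residue characteristic $\neq 2$, the group $\uF/\uFsqr$ is a Klein four-group generated by a uniformizer and a non-square unit, and one computes $\hilbsymb{\cdot}{\cdot}$ on representatives using Hensel's lemma. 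The archimedean case and the residue-characteristic-$2$ case are handled by a similar, more intricate, enumeration.

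For units $x,y$ in a non-archimedean local field of odd residue characteristic, $\hilbsymb{x}{y}=1$ follows from Hensel's lemma: the reduction of $z^2-xX^2-yY^2$ modulo $\PF$ is a non-degenerate ternary quadratic form over the finite residue field of odd characteristic, which represents zero non-trivially by Chevalley--Warning, and any smooth zero lifts. Non-degeneracy then falls out of the explicit tabulation above: for each non-trivial square class $a$ one exhibits a $b$ with $\hilbsymb{a}{b}=-1$. The main obstacle will be bimultiplicativity together with the accompanying tabulation, especially in residue characteristic $2$ where $\uF/\uFsqr$ has order $8$ and more bookkeeping is needed; for this case one typically appeals to local class field theory or to the classification of quadratic forms via the Hasse invariant, as in \cite{Ser}.
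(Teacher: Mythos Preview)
The paper does not actually prove this theorem. It is stated as a recollection of standard facts, with the prefatory remark ``For proofs and generalizations see \cite{FV}, \cite{N} and \cite{Ser}'' placed just before the definition of the Hilbert symbol; no argument is supplied in the text itself.

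Your proposal, by contrast, outlines a genuine proof along the classical lines of \cite{Ser}: reinterpret $\hilbsymb{a}{b}$ via the norm map from $\F(\sqrt{a})$, read off symmetry and the identities $\hilbsymb{a}{-a}=\hilbsymb{a}{1-a}=1$ from explicit solutions, reduce bimultiplicativity to the index-two statement for norm groups, and handle units in odd residue characteristic by Chevalley--Warning plus Hensel. This is correct in outline and is exactly the approach of the references the paper cites. The only place where your sketch is thin is, as you yourself flag, residue characteristic $2$: there the enumeration of $\uF/\uFsqr$ and the verification of bimultiplicativity genuinely require either the explicit formulas in \cite{Ser} or an appeal to local class field theory, and you would need to fill that in rather than gesture at it. But since the paper itself offers no proof and simply points to the same sources, your proposal already goes further than what the paper does.
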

\noindent

\label{symb:hasse} 
\begin{defn}
   Let $\nQ(x_1,\ldots,x_n)=a_1x_1^2+\cdots+a_nx_n^2$ be a quadratic form.
   The \textdf{Hasse invariant} of $\nQ$ is given by
   $\hasse(\nQ)=\prod_{i<j}\hilbsymb{a_i}{a_j}\in\rootunity{2}$.\\
\end{defn}

\newpage
\ftsection[\secsize]{Weil Representation}{sec:weil} 

In this section we recall formulas for the Weil representation (see
\cite{Rao}) and state some of its properties.

\par
Let $\F$ be a local field, and $\psi$ a non-trivial character of
$\F$. We assume $\tn{char}(\F)\neq2$. Let W be a $2n$-dimensional
symplectic vector space over $\F$ with symplectic form $<,>$. Let
$\nG=\tn{Sp}_{2n}(\nW)$ be the symplectic group of $(\nW,<,>)$. We
let $\nG$ act on W from the right.

 There are many explicit models of the metaplectic representation over a local field. For various models see
\cite[p. 28-30]{MVW}. We will be concerned with the
Schr\"{o}dinger model of the Weil representation.

\newcommand{\xo}{x_0}
\newcommand{\rhorep}{\rho_{\psi}}
\subsection{The Schr\"{o}dinger model}\label{subsec:Schrod} Let
$\nW$ be a symplectic vector space over a local field $\F$. Let
$\nW=\nX\oplus\nY$ be a complete polarization of W. Let
$\nS=\nS(\nX)$ be the \textdf{Schwartz space} on X. Then by
\cite[p.7, Lemma 2.2]{Ku2} we may define the representation
$\map{\rhorep}{\Hgp}{\aut(\nS(\nX))}$ of $\Hgp$ by
$$(\rhorep((x+y,t))\varphi)(\xo) = \psi(\sympform{\xo}{y}+\HALF\sympform{x}{y}+t)\varphi(\xo+x)
\quad(x,\xo\in\nX,y\in\nY,t\in\F).$$
where $\Hgp$ is the Heisenberg group and
$\rhorep$ is irreducible and its central character is $\psi$. (see \cite{Ku2} and \cite[p. 28-30]{MVW} for more details).

\begin{defn}
   The representation of the group $\SpWwave$ in the space S(X) is
   called the \textdf{Schr\"{o}dinger model} of the Weil
   representation associated to the complete polarization
   $\nW=\nX\oplus\nY$.
\end{defn}

From \cite[p. 351, Lemma 3.2]{Rao} (or \cite{Ku2}) we have the
following explicit description of 
the Schr\"{o}dinger model.

\par
Let $$g=\mativ{a}{b}{c}{d}$$ be the matrix representation of
$g\in\SpW$ with respect to the complete polarization
$\nW=\nX\oplus\nY$ and recall that $\SpW$ acts on the right of
$\nW$. Thus, we have
$$a\in\Hom{}{\nX}{\nX},\quad
b\in\Hom{}{\nX}{\nY},\quad c\in\Hom{}{\nY}{\nX},\quad
d\in\Hom{}{\nY}{\nY}$$

\newcommand{\Yoverkerc}{\ker(c)\backslash\nY}
\renewcommand{\dog}{\HALF\sympform{xa}{xb}+\sympform{xb}{xc}+\HALF\sympform{yc}{yd}}
\renewcommand{\cat}{xa+yc}
\newcommand{\aastr}{\mativ{a}{}{}{\str{a}}}
\newcommand{\bbstr}{\mativ{b}{}{}{\str{b}}}
\newcommand{\unib}{\mativ{1}{b}{}{1}}
\newcommand{\uniy}{\mativ{1}{y}{}{1}}
\newcommand{\deta}{\abs{\det(a)}^\HALF}
\newcommand{\detone}{\abs{\det(1)}^\HALF}

\begin{lem}\label{lem:WeilRep}
   There is a unique choice of Haar measure $d\mu_g(y)$ on
   $\tn{ker}(c)\backslash\nY$, such that the operator $r(g)$
   defined for $\varphi\in\nS(\nX)$ by
   $$(\RaoRep{g}\varphi)(x)\eqdef\Int{\Yoverkerc}\psi(\dog)\cdot\varphi(\cat)d\mu_g(y)$$
   is an intertwining operator between $\rho_\psi$ and $\rho_\psi^g$ in
   $\nS(\nX)$. Moreover, when $c=0$, the intertwining operator
   $\RaoRep{\cdot}$ is given by
   $$(\RaoRep{\mativ{a}{}{}{\str{a}}}\varphi)(x)=\abs{\det(a)}^{\HALF}\varphi(xa),\qquad a\in GL(X)\qquad(*)$$
   and
   $$(\RaoRep{\mativ{1}{b}{}{1}}\varphi)(x)=\psi(\HALF\sympform{x}{xb})\cdot\varphi(x),\qquad b=\tr{b}, b\in\Hom{}{X}{Y}\qquad(**)$$
\end{lem}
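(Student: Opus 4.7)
The plan is to invoke the Stone--von Neumann theorem. Since each $g\in\SpW$ fixes the center of $\Hgp$ pointwise, the twisted representation $\rho_\psi^g(w,t):=\rho_\psi(wg,t)$ is again an irreducible representation of $\Hgp$ on $\nS(\nX)$ with the same central character $\psi$ as $\rho_\psi$. By Stone--von Neumann (see \cite{Ku2}), $\rho_\psi^g\cong\rho_\psi$, so an intertwiner $\nS(\nX)\to\nS(\nX)$ exists and is unique up to a non-zero scalar.

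I would then propose the integral formula as a candidate intertwiner and verify by direct computation that, for a suitably chosen Haar measure $d\mu_g(y)$ on $\ker(c)\backslash\nY$, the operator $\RaoRep{g}$ satisfies
$$\RaoRep{g}\,\rho_\psi(w,t) \;=\; \rho_\psi(wg,t)\,\RaoRep{g}$$
on the Heisenberg generators $(x,0,0)$ and $(0,y,0)$, which (together with the center) generate $\Hgp$. Substituting these elements into both sides and performing a change of variables $y\mapsto y+y_0$ under the integral yields Fourier-type identities whose matching of linear and quadratic phases forces the exponent $\psi(\HALF\sympform{xa}{xb}+\sympform{xb}{yc}+\HALF\sympform{yc}{yd})$ under the integrand. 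The existence of a non-zero scalar making both sides agree on all of $\Hgp$ is guaranteed by Schur's lemma, and absorbing that scalar into the choice of Haar measure pins down $d\mu_g$ uniquely.

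For the case $c=0$, one has $\ker(c)=\nY$, so $\ker(c)\backslash\nY$ is trivial and the integral collapses to its integrand evaluated at $y=0$. For $g=\aastr$ with $a\in GL(\nX)$, the cross term and quadratic phase vanish (since $b=0$), and the formula reduces to $(\RaoRep{g}\varphi)(x)=\deta\varphi(xa)$; the factor $\deta$ is the Radon--Nikodym normalization required for unitarity on $L^2(\nX)$. For $g=\unib$ with $b=\tr b$, the matrix has $a=1=d$ and $c=0$, so only the term $\HALF\sympform{x}{xb}$ survives and the formula degenerates to pointwise multiplication by $\psi(\HALF\sympform{x}{xb})$.

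The main obstacle will be the direct verification of the intertwining relation for general $g$, which demands repeated careful use of the bilinearity and skew-symmetry of $\sympform{\cdot}{\cdot}$ to reorganize the quadratic phase under the integral, plus a delicate bookkeeping of the Haar measures on $\nY$, $\ker(c)$, and the quotient $\ker(c)\backslash\nY$. Once intertwining is checked on the Heisenberg generators, Schur's lemma applied to the irreducible $\rho_\psi$ yields the scalar uniqueness, completing both the existence of $\RaoRep{g}$ and the uniqueness of $d\mu_g$.
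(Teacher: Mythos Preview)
Your sketch is correct and follows the standard argument via Stone--von Neumann and Schur's lemma, which is precisely the route taken in \cite{Rao} and \cite{Ku2}. However, the paper itself does not prove this lemma at all: its proof reads simply ``Omitted,'' with the result attributed to \cite[p.~351, Lemma 3.2]{Rao} (see also \cite{Ku2}). So there is nothing to compare against beyond the references, and your outline is essentially a compressed version of what one finds there.

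One minor point worth tightening if you write this out in full: in the $c=0$ case the quotient $\ker(c)\backslash\nY$ is a single point, so ``Haar measure'' degenerates to a choice of positive scalar. Your remark that the factor $\abs{\det(a)}^{1/2}$ is the normalization required for unitarity is the right intuition, but strictly within the framework of the lemma that scalar \emph{is} the measure $d\mu_g$, not something separate from it; phrasing it that way keeps the uniqueness claim clean.
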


\begin{proof}
Omitted.

\end{proof}

\subsection{The Metaplectic Group over the Ad\`{e}le Ring}\label{subsec:globalweil}
\newcommand{\SpWAhat}{\widehat{\tn{Sp}}_\fA(W)}
\newcommand{\locTwoCover}{\widetilde{Sp}({\local{W}})}
\newcommand{\loceps}{\local{\varepsilon}}

Let $\F$ be a number field. In \cite{Howe} Roger Howe describes the
two-fold central extension of the symplectic group over the
ad\`{e}le ring, $\SpWAwave$. In this subsection we give a concrete
realization of the group $\SpWAwave$. Let $W,<\cdot,\cdot>$ be a
symplectic vector space over $\F$ with $\dim_\F W=2n$ and fix a
choice of a symplectic basis
$B=\set{e_1,\ldots,e_n;e_1^*,\ldots,e_n^*}$ of $W$ over $\F$.
Consider the restricted direct product

$$\SpWAhat\eqdef\prod_\nu'\locTwoCover$$

\newcommand{\maxcomp}{\tn{Sp}(W_{\locOF})}
\noindent
with respect to $\defset{\loceps(\local K)}{\nu\tn{ odd and
finite}}$ where $\local{K}\eqdef\maxcomp$ is the maximal compact
subgroup which stabilizes the $\locOF$-lattice in $\local W$ spanned
by the symplectic basis $B$. It is known that there is an embedding
$\map{\loceps}{\local{K}}{\SpWlocFwave}$ such that
$\loceps(r)=(r,\local{\epsilon}(r))$ (see \cite[p. 43]{MVW}).

\par
This restricted direct product does not furnish us with a two-fold
central extension of $\SpWA$, since the kernel of the projection
homomorphism is too big. Thus it would seem reasonable to consider
the quotient of the restricted product
$\SpWAhat\eqdef\prod_\nu'\locTwoCover$ and a subgroup that will
cause the kernel to be "very small", i.e. to be isomorphic to
$\rootunity{2}$. More precisely, consider the subgroup

$$C'\eqdef\defset{\prod_\nu(I,\loceps)\in\SpWAhat}{\prod_\nu\loceps=1}.$$

Then the group
$$\SpWAwave\eqdef C'\backslash\SpWAhat$$
is a two-fold central extension of $\SpWA$ and we have an exact
sequence

$$\exactseqi{1}{\rootunity{2}\cong\set{C'\prod_nu(1,\loceps)\in\SpWAwave}}{\SpWAwave}{\SpWA}{1}{i}{p}$$

where

$$\map{p}{\SpWAwave}{\SpWA}$$

is the natural projection. Note that as in \cite{Howe}, we have the
commutative diagram

$$\begin{array}{rcccl}
     & \SpWlocFwave & \longrightarrow & \SpWAwave &   \\
     & \downarrow &  & \downarrow &  \\
     & \SpWlocF & \longrightarrow & \SpWA &
\end{array}
$$

where the vertical arrows are the natural projections. Let $\psi$
be a non-trivial additive character of $\F\backslash\fA$. Denote
$X=\Span\set{e_1,\ldots,e_n}$ and
$Y=\Span\set{e_1^*,\ldots,e_n^*}$ (regarded as algebraic groups
over $\F$). Then we have the global Weil representation $\weilj$
of $\SpWAwave$ acting on $S(X_\fA)$. It is given by
$\tensor'_\nu\omega_{\local{\psi}}$, where $\omega_{\local{\psi}}$
are the local Weil representations of $\SpWlocFwave$ acting in
$S(\local{X})$. Thus, let $\phi\in S(X_\fA)$ be a decomposable
function, i.e. $\phi(x)=\prod_\nu\local{\phi}(\local x)$, where
$x\in X_\fA, \local{\phi}\in S(\local{X})$ and at almost all
finite places $\nu$, $\local \phi$ is the characteristic function
$\localphinot$ of $\oplus_{i=1}^n\locOF e_i$. Let
$\tilde{g}=C'\prod_\nu(\local g, \local z)$ where $(\local g,
\local z)\in\SpWlocFwave, g=\prod_\nu\local{g}\in\SpWA$ and
$\prod_\nu(\local g, \local z)\in\SpWAhat$. Then
$\weilj(\tilde{g})\phi(x)=\prod_\nu\omega_{\local{\psi}}(\local g,
\local z)\localphi(\local x)$. (It is known that
$\omega_{\local{\psi}}(\loceps(\local
K))\localphinot=\localphinot$, for almost all finite places
$\nu$.) Finally, for $\gamma\in\tn{Sp}(W_\F)$ the map
$\gamma\mapsto C'\prod_\nu(\gamma,1)$ is an embedding of
$\tn{Sp}(W_\F)$ inside $\SpWAwave$.


\ftsection[\secsize]{The Theta Correspondence and Reductive Dual
Pairs}{sec:thetacorr} 
\label{symb:thetadistrib}
\label{subsec:dualpair}

\newcommand{\cent}[2]{\tn{Cent}_{#1}(#2)}
We recall the Weil representations, recall the theta correspondence and Howe duality.
\par 
Let $\F$ be a field ($\tn{char}(\F)\neq2$). Let $V$ (respectively $W$) be a finite
dimensional vector space over $\F$, equipped with a non-degenerate
symmetric (respectively anti-symmetric) bilinear form
$(\cdot,\cdot)$ (respectively $<\cdot,\cdot>$). Consider the
orthogonal group $\OV$ and the symplectic group $\SpW$. Suppose
$\dim_\F(\nV)=m$ and $\dim_\F(\nW)=2n$. Construct the
$2mn$-dimensional $\F$-vector space
$$\fW\eqdef\nV\tensorF\nW$$
This vector space becomes a symplectic vector space together with
the symplectic form given by
$$\Sympform{\viwi}{\viiwii}\eqdef\orthform{\vi}{\vii}\sympform{\wi}{\wii}$$
and we have an embedding (with a small kernel
$\defset{(\epsilon\nI_\nV,\epsilon\nI_\nW)}{\epsilon\in\set{1,-1}}$)
$$
\begin{array}{c}
  \OV\times\SpW\To\SpWW \\
  (v,w)\mapsto\vw
\end{array}
$$

\par
The pair $(\OV,\SpW)$ is a reductive dual pair. Let $\F$ be a local
field. Then the symplectic group $\SpW$ has a unique (non-linear)
double cover $\SpWwave$ (unless $\F=\fC$). This double cover always
splits over the subgroup $\OV$ and splits over $\SpW$ if and only if
$\dim(\nV)=m$ is even.
\par
Thus if $m$ is even we have an
embedding
$$\fZ_2\backslash\OV\times\SpW\hookrightarrow\widetilde{\tn{Sp}}(\fW)$$
and if $m$ is odd we have an embedding
$$\OV\times\SpWwave\hookrightarrow\widetilde{\tn{Sp}}(\fW)$$

\subsection{The Weil Representation Restricted to $(\HA[2k],\GA[2n])$}
In this subsection, $\F$ is a number field $V$ will be the
$m$-dimensional column space over $\F$. We assume that $m$ is even.
Consider a non-degenerate, symmetric bilinear form
$(\cdot,\cdot)_\nT$ on $V\times V$, where $\nT$ is constructed as in
subsection \ref{subsec:orth}. Let W be a symplectic vector space of
dimension $\dim_\F\nW=2n$ over $\F$ for some $n\in\fN$ with
symplectic form $<\cdot,\cdot>$.

\par
We keep the notation of subsection \ref{subsec:globalweil}. Let
$\Weilj$ be the Weil representation of $\SpVWAwave$. Let
$$\Weil{n}\eqdef\Weilj|_{\SpWA\times\OVA}.$$

Let $\nW=\nX\oplus\nY$ be a complete polarization of $\nW$ over
$\F$. (We write the elements of $\SpW$ as in subsection
\ref{subsec:symp}.) Then $\Weil{n}$ acts on
$$\nS(\nX\otimes\nV)_\fA\cong\nS(\nV^{n}_\fA)$$
because $\nW\otimes\nV=\nX\otimes\nV\oplus\nY\otimes\nV$ is a
complete polarization of $\nW\otimes\nV$ and
$$\nX\otimes\nV\cong\underbrace{\nV\oplus\cdots\oplus\nV}_{\tn{n times}}=\nV^{n}$$

Before we proceed any further, we would like to have some formulas
for the Weil representation restricted to our dual pair. The
following lemma furnishes us with formulas for $\Weil{n}$ which will be used throughout.

\begin{lem}\label{lem:weilformulas}

   Assume $\dim_\F\nW=2n$ and $\dim_\F\nV=m$, where $m$ is even. Then for $\phi\in\nS(\nV^n_\fA),x\in\nV^n_\fA$
   we have the following formulas
   \begin{equation}\label{weil:i}
    \Weil{n}(\aastr,1)\phi(x)=\hilbsymbI{\det(a)}{(-1)^{\frac{m}{2}}\cdot\det(\nV)}{}\abs{\det(a)}^{\mHALF}\phi(xa)
\end{equation}
   where $a\in\GL{n}(\fA)$ and $\det\nV\eqdef\det\nT$ and
   $\hilbsymbI{\cdot}{\cdot}{}$ denotes the product over all places of $\F$ of the local Hilbert symbols and
   $xa=(v_1,\ldots,v_n)\cdot a$.
   and

   \begin{equation}\label{weil:ii}
    \Weil{n}(\unib,1)\phi(x)=\ACHAR{\tn{Gr}(x)\wn b}\cdot\phi(x)
   \end{equation}

   where $\wn b=\tr{(\wn b)},\ \tn{Gr}(x)\eqdef(\orthform{x_i}{x_j})_{i,j}$

   \begin{equation}\label{weil:iii}
   \Weil{n}(1,h)\phi(x)=\phi(\inv{h}\cdot x)
   \end{equation}

\end{lem}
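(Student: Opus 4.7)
The plan is to specialize Rao's Schr\"odinger-model formulas from Lemma \ref{lem:WeilRep} to elements in the image of the dual pair embedding $\SpW \times \OV \hookrightarrow \tn{Sp}(\fW)$, and then pass to the ad\`ele ring by taking restricted tensor products over all places as in Subsection \ref{subsec:globalweil}. Fix the polarization $\fW = X\otimes\nV \oplus Y\otimes\nV$ inherited from $\nW = X \oplus Y$ and identify $X\otimes\nV\cong\nV^n$ via $\sum_i e_i\otimes v_i \leftrightarrow (v_1,\ldots,v_n)$. Normalize the dual pair embedding so that $\SpW$ acts on $\fW$ from the right and $\OV$ from the left, giving a right action of $\OV\times\SpW$ on $\fW$ in which $\OV$ acts through its inverse.

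For \eqref{weil:i}, the element $\aastr\in\SpW$ embeds into $\tn{Sp}(\fW)$ as $\mativ{a\otimes 1_\nV}{}{}{\str{a}\otimes 1_\nV}$, which is of Rao's Levi form $(*)$. Applying $(*)$ on $X\otimes\nV\cong\nV^n$ produces $|\det(a\otimes 1_\nV)|^{1/2}\phi(xa)$; since $\det(a\otimes 1_\nV)=\det(a)^m$, this equals $|\det(a)|^{m/2}\phi(xa)$. The remaining factor $\hilbsymbI{\det(a)}{(-1)^{m/2}\det(\nV)}{}$ is the Kudla splitting correction: the naive set-theoretic section over the Siegel Levi of $\SpW$ differs from the canonical splitting of the metaplectic cover restricted to $\OV\times\SpW$ by precisely this character, a well-known local computation (see, e.g., Kudla's notes on the theta correspondence) which assembles into a global product over $\fA$.

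For \eqref{weil:ii}, the element $\unib$ with $\wn b$ symmetric embeds as $\mativ{1}{b\otimes 1_\nV}{}{1}$ in $\tn{Sp}(\fW)$, so Rao's $(**)$ yields the phase $\psi\bigl(\tfrac12\Sympform{x}{x(b\otimes 1_\nV)}\bigr)$. Writing $x=\sum_i e_i\otimes v_i$ and using $\Sympform{\viwi}{\viiwii}=\orthform{\vi}{\vii}\sympform{\wi}{\wii}$, the phase becomes $\sum_{i,j}\sympform{e_i}{e_j b}\orthform{v_i}{v_j}$, which, using the symmetry of $\wn b$, equals $\tfrac12\tn{tr}(\tn{Gr}(x)\wn b)$ and thus produces $\ACHAR{\tn{Gr}(x)\wn b}$. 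For \eqref{weil:iii}, the element $(1,h)$ embeds as a block-diagonal symplectic element whose action on $\nV^n$ is the diagonal $\inv{h}$-action (reflecting the right-action convention on $\OV$), so Rao's $(*)$ gives determinant factor $|\det h|^{1/2}=1$ and the formula $\phi(\inv{h}\cdot x)$. The Kudla splitting is trivial on unipotent radicals and on $\OV$, so no Hilbert symbol appears in either case.

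The main obstacle is the Hilbert symbol in \eqref{weil:i}: the other formulas amount to a direct unwinding of Rao's formulas under the tensor identification, whereas the Hilbert symbol encodes the nontrivial comparison of the naive and canonical splittings of the metaplectic cover over the Siegel Levi, and its careful local-to-global accounting (place-by-place computation, followed by the product over all places giving the global symbol $\hilbsymbI{\cdot}{\cdot}{}$) constitutes the substantive content of the lemma.
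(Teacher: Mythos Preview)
Your proposal is correct and in fact considerably more detailed than the paper's own proof, which consists of a single sentence citing \cite[Proposition~4.3, p.~37]{Ku2} for $m$ even. What you have sketched is essentially an outline of how that proposition in Kudla's notes is established: specializing Rao's Schr\"odinger-model operators to the image of the dual-pair embedding, and then identifying the Hilbert-symbol correction as the discrepancy between Rao's naive section and Kudla's canonical splitting over the Siegel Levi. So there is no genuine difference in approach---you are unpacking the reference rather than offering an alternative route---and your identification of the Hilbert symbol in \eqref{weil:i} as the only substantive step is exactly right.
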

\newcommand{\jV}{\tn{j}_{\nV}}
\newcommand{\jVgi}{\jV(\gi)}
\newcommand{\jVgii}{\jV(\gii)}
\newcommand{\SpVW}{\tn{Sp}(\nV\otimes\nW)}
\newcommand{\jVgigii}{\jV(\gi\gii)}

\newcommand{\jW}{\tn{j}_{\nW}}

\begin{proof}
   This follows from \cite[Proposition 4.3, p. 37]{Ku2} for $m$
   even.
\end{proof}

\begin{rem}
   The formulae in Lemma \ref{lem:weilformulas} are valid at each
   place, i.e. they are valid locally.
\end{rem}


\subsection{Theta Correspondence}\label{subsec:thetacorr}
Define a
linear functional
$$\map{\Theta}{\nS(\nX(\fA))}{\fC}\qquad\qquad(*)$$
given by
$$\Theta(\phi)=\Sum{x\in\nX(\F)}\phi(x),\quad\phi\in\nS(X(\fA))$$

This distribution is $\tn{Sp}(W)(\F)$-invariant, i.e.
$\Theta(\omega_\psi(\gamma)\phi)=\Theta(\phi)$ for all
$\gamma\in\tn{Sp}(V\tensor W)(\F)$. This is a fundamental theorem of
Weil.


Given any function $\phi\in\nS(\nX(\fA))$, the following formula
defines a function $\thetafuncj{2n}$ on
$\SpW(\F)\backslash\SpWwave(\fA)$ given by
$\thetafuncj{2n}$\label{symb:thetafunc}
\begin{eqnarray*}
   \thetafuncj{2n}(g,z) & \eqdef& \Theta(\Weilj(g,z)\phi)\\
                     & =     & \Sum{x\in\nX(\F)}(\Weilj(g,z)\phi)(x)
\end{eqnarray*}

Given a reductive dual pair $(\Gi,\Gii)$ inside $\SpWW$, Howe's idea
was to restrict the Weil representation on $\SpWwave(\fA)$ to the
product $\widetilde{\Gi(\fA)\cdot\Gii(\fA)}$. Now, $\Giwave(\fA)$ is
the inverse image of $\Gi(\fA)\times\nI$ and similarly,
$\Giiwave(\fA)$ is the inverse image of $\nI\times\Gii(\fA)$. There
is a homomorphism
$\Giwave(\fA)\times\Giiwave(\fA)\mapsto\widetilde{\Gi(\fA)\cdot\Gii(\fA)}$
with kernel
$\defset{(1,\epsilon),(\epsilon,1)}{\epsilon\in\rootunity{2}}$. Thus
by restricting to $\Giwave(\fA)\times\Giiwave(\fA)$, we get a
function on $\Giwave(\fA)\times\Giiwave(\fA)$ called the
\textdf{theta-kernel}, given by
$$\thetafuncj{2n}(g,h)=\thetakerneljjj{2n}{g}{h}{x}{x\in\nX(\nF)}$$
\par
Let $\cA_1$ be an irreducible space of cusp forms on
$\Gi(\F)\backslash\Giwave(\fA)$. Fix an $\nf\in\cA_1$. We define a
function
$$\gii\mapsto\Int{\Gi(\F)\backslash\Giwave(\fA)}\thetafuncj{2n}(\gi,\gii)\nf(\gi)d\gi$$
on $\Gii(\F)\backslash\Giiwave(\fA)$. The span of these functions
when $\phi$ runs over $\nS(\nX(\fA))$ is a $\Giiwave(\fA)$-invariant
space of functions. Moreover, if in addition, we take the span of
all $\nf$ that run over $\cA_1$ then we obtain the so-called
\textdf{theta lift} of the cuspidal representation $\cA_1$ to $G_2$
and is denoted by $\Theta_{\psi}(\cA_1)$.\label{symb:thetalift}
Moreover, convergence of the above integral is gauranteed by the
fact that cusp forms $\nf(\gi)$ are known to be rapidly decreasing
(in a Siegel domain) while $\thetafuncj{2n}(\gi,\gii,1)$ is of
moderate growth in $\gi$ and in $\gii$.
\par
Similarly, when the roles of $\Gi$ and $\Gii$ are reversed. Let
$\cA_2$ be an irreducible space of cusp forms on
$\Gii(\F)\backslash\Giiwave(\fA)$. Then we obtain the theta lift
of $\cA_2$ to $\Gi$ denoted by $\Theta_{\psi}(\cA_2)$.

\subsection{Howe Duality}\label{subsec:howeduality}

Let $\F$ be a local field. Let $\pair{\Gi}{\Gii}$ be a
reductive dual pair inside $\SpW$. Let $\Giwave,\Giiwave$ be inverse
images of $\Gi,\Gii$ over a local field $\F$, respectively.

\begin{defn} Given admissible representations $\isin{\pi}{\irr{\Giwave}},\isin{\sigma}{\irr{\Giiwave}}$.
$\pi,\sigma$ are said to \textdf{correspond} with respect to
$\omega_\psi$ if
$\Hom{\Giwave\times\Giiwave}{\omega_\psi}{\pi\tensor\sigma}\neq0$.
The correspondence between $\irr{\Giwave}$ and $\irr{\Giiwave}$ is
the set
$$\defset{\pair{\pi}{\sigma}\in\cardprod{\rr{\Giwave}}{\rr{\Giiwave}}}{\Hom{\Giwave\times
\Giiwave}{\omega_\psi }{\pi\tensor\sigma}\neq0}$$ where
$$\rr{\Giwave}\IN\irr{\Giwave},\rr{\Giiwave}\IN\irr{\Giiwave}$$ are given by
$$\rr{\Giwave}\eqdef\defset{\isin{\pi}{\irr{\Giwave}}}{\Hom{\tilde{\Gi}}{\omega_\psi }{\pi}\neq0}$$
$$\rr{\Giiwave}\eqdef\defset{\isin{\sigma}{\irr{\Giiwave}}}{\Hom{\tilde{\Gii}}{\omega_\psi }{\sigma}\neq0}$$
\end{defn}

Howe Duality implies that
\begin{enumerate}
  \item Every
  $\pi\in\rr{\Giwave}$ (or $\sigma\in\rr{\Giiwave}$) occurs in the
  correspondence.
  \item For all
  $\isin{\pi}{\irr{\Giwave}},\isin{\sigma_1,\sigma_2}{\irr{\Giiwave}}$\\
  $\Hom{\Giwave\times\Giiwave}{\omega_\psi }{\pi\tensor\sigma_i}\neq0,i=1,2$
  $\Rightarrow\sigma_1\cong\sigma_2$\\ For all
  $\isin{\sigma}{\irr{\Giiwave}},\isin{\pi_1,\pi_2}{\irr{\Giwave}}$\\
  $\Hom{\Giwave\times\Giiwave}{\omega_\psi }{\pi_i\tensor\sigma}\neq0,i=1,2$
  $\Rightarrow\pi_1\cong\pi_2$
\end{enumerate}

\begin{thm}\label{TH:Howeduality}
   Howe duality is true over all archimedean local fields (proved by
   Howe) and all $p$-adic fields with $p\neq2$ (proved by Waldspurger).
\end{thm}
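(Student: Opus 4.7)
The plan is to treat the archimedean and non-archimedean cases separately, since the analytic techniques differ substantially. In both settings one restricts the Weil representation $\omega_\psi$ to the dual pair $\Giwave \times \Giiwave$ and establishes two assertions: existence of a correspondence (every $\pi \in \rr{\Giwave}$ occurs, so $\Hom{\Giwave}{\omega_\psi}{\pi}$ viewed as a $\Giiwave$-module is nonzero) and uniqueness of the partner $\sigma \in \irr{\Giiwave}$ attached to a given $\pi$. The target in each case is to show that this partner is a well-defined irreducible module, which amounts to showing that the big Hom-space is essentially isotypic.

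For the archimedean case, following Howe, I would pass to $(\mathfrak{g}, K)$-modules and exploit the explicit Fock realization of $\omega_\psi$, which has a very clean joint $K$-type decomposition under a maximal compact subgroup of $\Giwave \times \Giiwave$. The main step is to use infinitesimal character considerations together with Howe's notion of a dual pair of reductive Lie algebras acting on a Harish-Chandra module to show that $\Hom{\Giwave}{\omega_\psi}{\pi}$, viewed as a $(\mathfrak{g}_2, K_2)$-module, has finite length and admits a unique irreducible quotient. This quotient is then the required $\sigma$, and uniqueness follows from the classification of its isotypic components.

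For the $p$-adic case with $p \neq 2$, following Waldspurger, the strategy is more intricate. I would combine the doubling method with a careful analysis of the Jacquet modules of $\omega_\psi$ with respect to maximal parabolic subgroups of $\Gi$ and $\Gii$, establishing a finite filtration whose successive quotients can be treated inductively. A see-saw argument pairing two distinct dual pairs inside a common symplectic group, together with essential use of the MVW involution to interchange $\pi$ with its contragredient, reduces the question to lower rank cases, eventually to dual pairs with one-dimensional or trivial factors where the claim is immediate.

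The principal obstruction, and the source of the $p \neq 2$ restriction, is that the metaplectic cover over a dyadic local field presents genuine complications: the local Weil index and the explicit cocycle formulas of Rao that drive the Jacquet module computation become significantly less tractable, and the Hilbert symbol loses the tame behaviour required by the inductive machinery. The analysis of unramified characters appearing in $\omega_{\localpsi}$ breaks down in residue characteristic two, which is why this case was resolved only considerably later (by Gan--Takeda and others); for our purposes we may safely invoke the theorem as stated since the places we need will be odd or archimedean.
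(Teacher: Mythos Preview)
The paper does not actually prove this theorem; it is stated as a black box with attributions to Howe (archimedean case) and Waldspurger ($p$-adic case with $p\neq 2$), and is then invoked later in Section~\ref{sec:main} as an input. So there is nothing to compare your argument against.

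That said, your sketch is a reasonable outline of the ideas behind the original proofs and is broadly accurate in spirit: Howe's argument does proceed via $(\mathfrak{g},K)$-modules and the Fock model, and Waldspurger's via Jacquet module filtrations, see-saw pairs, and the MVW involution. Two small corrections: the doubling method is not really the engine of Waldspurger's proof (it is a related but distinct technique, more relevant to global questions and to the later Gan--Takeda work), and your description of the obstruction at $p=2$ is slightly off --- the issue is less about the Hilbert symbol or Rao's cocycle per se and more about the lattice model and the structure of minimal types, which Waldspurger's induction relies on. None of this affects the paper, which simply cites the result; if you want to match the paper, a one-line reference to Howe and to Waldspurger (or to \cite{MVW}) is all that is expected here.
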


\newpage
\ftsection[\secsize]{Main Results}{sec:main}
In this section, $\F$ is a number field, which has at least one real
place. The matrix $T$ denotes the anisotropic matrix described in subsection
\ref{subsec:orth} and $\map{\OurChar}{\HA}{\ufC}$ is a
non-trivial automorphic character of $\HA$ as described in subsection
\ref{subsec:setup}.

\subsection{Our Main Results}\label{subsec:main}
\newcommand{\Sch}{\mathcal{S}}


Let $\ThetaNcuspT{k}{\OurChar}$ denote the representation of
$\SP[2k](\fA)$ generated by
$$\defset { g\mapsto\thetaintkT{k}}{\phi\in\glbSchwartzk{k}},\quad 1\leq k\leq 2n $$


where
$$\thetafuncT{k}(g,h,1)=\thetakernel{k}{g}{h}{\bfv},\quad\phi\in\nS(\nV^{k}_\fA)$$
is a theta kernel. This is the theta lift of $\OurChar$ to
$\tn{Sp}_{2k}(\fA)$. Recall that in our setting $\HAoverHF$ is compact.

\newcommand{\vnot}{\mathbf{v^0}}


\par
We are now ready to formulate the main theorems to be proved.


\begin{thm}\label{TH:fourier}
Let $T$ be the anisotropic matrix from section \ref{subsec:orth}. Let
$\map{\OurChar}{\HA}{\ufC}$ be the non-trivial automorphic character of
$\HA$ as defined in subsection \ref{subsec:setup}. Then
    \begin{enumerate}
      \item $\ThetaNcuspT{2n}{\OurChar}\neq0$
      \item $\ThetaNcuspT{k}{\OurChar}=0,\quad\forall 1\leq k<2n$
   \end{enumerate}
\end{thm}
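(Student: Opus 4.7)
\noindent
The plan is to evaluate
\[
I_k(g,\phi) \eqdef \int_{\HAoverHF} \thetafuncT{k}(g,h,1)\, \OurChar(h)\, dh
\]
by using formula (\ref{weil:iii}) to rewrite the theta kernel as $\sum_{x \in \nV^k(\F)} (\Weil{k}(g,1)\phi)(\inv{h}x)$ and then unfolding over $\HF$-orbits in $\nV^k(\F)$. The standard double-coset unfolding is rigorous because $\HAoverHF$ is compact (Theorem \ref{TH:reductiontheory}), so everything converges absolutely. The contribution of the orbit $\HF \cdot x_0$ separates as
\[
\biggl[\int_{H_{x_0}(\fA)\backslash\HA} (\Weil{k}(g,1)\phi)(\inv{h}x_0)\, \OurChar(h)\, dh\biggr] \cdot \biggl[\int_{H_{x_0}(\F)\backslash H_{x_0}(\fA)} \OurChar(h_0)\, dh_0\biggr],
\]
where $H_{x_0} \cong \tn{O}(U^\perp)$ with $U \subset \nV$ the $\F$-span of the components of $x_0$. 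Since $|\nS|$ is even, $\OurChar$ is trivial on $\HF$ and hence on $H_{x_0}(\F)$, so the inner integral is meaningful; it vanishes precisely when $\OurChar$ is non-trivial on $H_{x_0}(\fA)$.

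\medskip
\noindent
For item (2), assume $k < 2n$. The trivial orbit $x_0 = 0$ gives $(\Weil{k}(g,1)\phi)(0) \cdot \int_{\HAoverHF}\OurChar(h)\,dh = 0$, since $\OurChar$ is a non-trivial character on the compact group $\HAoverHF$. For $x_0 \neq 0$, $\dim U^\perp \geq 2n-k \geq 1$ and the restriction of $\nT$ to $U^\perp$ is anisotropic, so $H_{x_0}(\F)\backslash H_{x_0}(\fA)$ is compact. Fix an archimedean place $\nu_0 \in \nS$ (which exists since $\placeFinf \subseteq \nS$); by the positive-definiteness of $\nT$ at $\nu_0$, the group $H_{x_0}(\localo{\F})$ is a non-trivial compact orthogonal group and admits an element of determinant $-1$ (e.g.\ any reflection along a non-zero vector in $U^\perp \otimes \localo{\F}$, which is automatically non-isotropic). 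Placing such an element at $\nu_0$ and the identity at all other places produces $h \in H_{x_0}(\fA)$ with $\OurChar(h) = -1$, so $\OurChar$ is non-trivial on $H_{x_0}(\fA)$. Hence the inner integral vanishes for every non-zero orbit, and $I_k(g,\phi) \equiv 0$.

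\medskip
\noindent
For item (1), take $k = 2n$. Orbits of rank $< 2n$ vanish by exactly the same argument, so $I_{2n}(g,\phi)$ reduces to a sum over $\HF$-orbits of ordered bases of $\nV(\F)$; each such basis has trivial stabilizer, and contributes $\int_{\HA}(\Weil{2n}(g,1)\phi)(\inv{h}x_0)\, \OurChar(h)\, dh$. Fix an ordered basis $e = (e_1,\dots,e_{2n})$ of $\nV(\F)$ with Gram matrix $\nT$, set $g = 1$, and choose $\phi = \prod_\nu \phi_\nu$ with each $\phi_\nu$ supported in a small neighborhood of $H_\nu \cdot e$ in $\nV_\nu^{2n}$. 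Since the Gram matrix is $\HA$-invariant, bases with Gram $\neq \nT$ lie in disjoint $\HA$-orbits and are killed by $\phi$; by Witt's theorem over $\F$, only $\HF \cdot e$ then survives. The remaining integral factors over places as $\prod_\nu \int_{H_\nu} \phi_\nu(\inv{h_\nu}e)\, \OurChar_\nu(h_\nu)\, dh_\nu$. Since $e$ has trivial stabilizer, $h_\nu \mapsto \inv{h_\nu}e$ is an embedding, so a Whitney-type extension lets one pick $\phi_\nu$ with $\phi_\nu(\inv{h_\nu}e) = \OurChar_\nu(h_\nu)\chi_\nu(h_\nu)$ for a non-negative bump $\chi_\nu$; using $\OurChar_\nu^2 = 1$ (it equals $\det$ on $\nS$ and is trivial off $\nS$) and the compactness of $H_\nu$ at archimedean places, the local integral equals $\int_{H_\nu}\chi_\nu\,dh_\nu > 0$. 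At finite $\nu \notin \nS$ one takes the standard spherical/characteristic-function choice of $\phi_\nu$. The global product is then non-zero.

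\medskip
\noindent
The principal technical obstacle lies in item (1): one must simultaneously arrange $\phi$ so that (a) only the orbit $\HF \cdot e$ survives, which is handled by Witt's theorem and the $\HA$-invariance of the Gram matrix, and (b) each local integral is non-zero, which reduces to a Whitney-type extension and positivity of $\OurChar_\nu^2$. The enabling features throughout are compactness of $\HAoverHF$ (removing all convergence worries in the unfolding), compactness of $H_\nu$ at archimedean places, and triviality of the stabilizer of a basis. The vanishing half (item (2)) is, by comparison, a routine local calculation: the non-triviality of $\OurChar|_{H_{x_0}(\fA)}$ is witnessed at any single archimedean place in $\nS$.
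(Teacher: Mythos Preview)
Your orbit-unfolding strategy is essentially the same mechanism the paper uses, and your proof of item~(2) is correct: both you and the paper reduce each orbit's contribution to the factor $\int_{H_{x_0}(\F)\backslash H_{x_0}(\fA)}\OurChar(r)\,dr$, which vanishes because $\OurChar$ is non-trivial on the stabilizer $H_{x_0}(\fA)\cong\nO(U^\perp)(\fA)$; your reflection at the real place $\nu_0\in S$ is exactly the right witness. The only cosmetic difference is that the paper first expands in Fourier coefficients along the Siegel unipotent radical $\nZ$, which groups the orbits by their Gram matrix, before unfolding. For the vanishing half this detour is unnecessary, and your direct argument is fine.

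For item~(1), however, there is a gap in how you isolate a single orbit. After the rank-$<2n$ orbits drop out you are left with an \emph{infinite sum} over $\HF$-orbits of bases, one for each possible Gram matrix over $\F$, and your construction of $\phi$ is internally inconsistent: you first require ``each $\phi_\nu$ supported in a small neighborhood of $H_\nu\cdot e$,'' then a few lines later take the standard lattice characteristic function at $\nu\notin S$, whose support is not small. Support conditions at the finitely many places in $S$ alone cannot force $\Gr(x_0)=\nT$, since $\F$ is dense in each $\localF$. The repair is either (a) to invoke discreteness of $\F$ in $\fA$ explicitly: the standard $\phi_\nu$ at $\nu\notin S$ forces $\Gr(x_0)$ to have $\cO_\nu$-entries there (once $S$ is enlarged so that $\nT$ is $\cO_\nu$-integral off $S$), and together with smallness at $\nu\in S$ this traps $\Gr(x_0)-\nT$ in a basic adelic neighborhood of $0$, hence $\Gr(x_0)=\nT$; or (b) to do what the paper does and compute the single Fourier coefficient along $\nZ$ at $C=\nT$, which from the outset picks out exactly the tuples with $\Gr(\bfv)=\nT$. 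Route~(b) collapses item~(1) to the single integral $\int_{\HA}\phi(h^{-1}\vnot)\,\OurChar(h)\,dh$ with no residual sum to control, and the paper then makes this non-zero by essentially the same local choices of $\phi_\nu$ that you sketch.
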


By applying this theorem together with Rallis' theorem (Theorem
\ref{TH:Rallis}) we almost immediately obtain the following result

\begin{thm}\label{TH:ThetaRepCuspidal}
   $\ThetaNcuspT{2n}{\OurChar}$ is an irreducible, automorphic,
   cuspidal representation of $\GA$.
\end{thm}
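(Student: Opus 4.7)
The plan is to derive Theorem~\ref{TH:ThetaRepCuspidal} directly from Theorem~\ref{TH:fourier} in two stages: first establish cuspidality via Rallis' tower principle (Theorem~\ref{TH:Rallis}), and then establish irreducibility by combining local Howe duality (Theorem~\ref{TH:Howeduality}) with Moeglin's global result \cite{M}. The fact that $\OurChar$ is a \emph{character} (hence automatically irreducible and with compact quotient $\HAoverHF$) makes $\OurChar$ itself play the role of the ``input cuspidal representation'' on the orthogonal side, so no convergence or cuspidality issues for the input need to be checked.

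For cuspidality, I would invoke the Rallis tower principle along the tower of dual pairs $\OurDualPairk{k}$, $1\leq k\leq 2n$. Theorem~\ref{TH:Rallis} says that within such a tower, the first non-zero theta lift of an automorphic representation lands in the cuspidal spectrum. By Theorem~\ref{TH:fourier}(2) the lifts $\ThetaNcuspT{k}{\OurChar}$ vanish for $1\leq k<2n$, and by Theorem~\ref{TH:fourier}(1) the lift at $k=2n$ is non-zero, so $k=2n$ is exactly the first non-vanishing level. Therefore every automorphic form in $\ThetaNcuspT{2n}{\OurChar}$ is a cusp form on $\GA$.

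For irreducibility, I would work place by place. Since $\OurChar=\prod_\nu\locOurChar$ and each $\locOurChar$ is a one-dimensional, hence irreducible, representation of $\locHF$, local Howe duality (valid at all archimedean places and all odd finite places by Theorem~\ref{TH:Howeduality}) produces a unique irreducible representation $\localpi$ of $\locGF$ corresponding to $\locOurChar$ under the local theta correspondence. Set $\pi\cong\restensor_\nu\localpi$. Any irreducible constituent of the cuspidal automorphic representation $\ThetaNcuspT{2n}{\OurChar}$ decomposes, by Flath's theorem, into a restricted tensor product whose local factors must be the local theta lifts of $\locOurChar$, i.e.\ must coincide with $\localpi$ at almost every $\nu$. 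Moeglin's theorem on the global theta correspondence in the first non-vanishing stratum of the tower then forces $\ThetaNcuspT{2n}{\OurChar}$ to be irreducible and isomorphic to $\pi$.

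The main obstacle is the irreducibility step, since cuspidality is essentially a direct corollary of the Rallis tower principle and Theorem~\ref{TH:fourier}. Irreducibility of a global theta lift is not automatic: in general the lift may split as a sum of several irreducible constituents. The key point that makes this work here is that we are at the first non-vanishing level, where Moeglin's result applies, and that the input $\OurChar$ is a character so that the local hypotheses of Howe duality are uniformly satisfied. I would conclude by remarking that this simultaneously justifies the expression $\pi\cong\restensor_\nu\localpi$ needed for the explicit description of the unramified local factors $\localpi$ in Theorem~\ref{TH:ExplicitCon}.
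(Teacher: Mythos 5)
Your proof is essentially the same as the paper's: cuspidality is deduced from the Rallis tower principle (Theorem~\ref{TH:Rallis}) together with the vanishing/non-vanishing computation in Theorem~\ref{TH:fourier}, and irreducibility is deduced from Moeglin's theorem \cite{M} that a theta lift which is cuspidal (i.e.\ occurs at the first non-vanishing level of the tower) is automatically irreducible. The detour you take through local Howe duality and Flath's decomposition theorem is not wrong, but it does no independent work toward irreducibility: Flath's theorem applies to representations already known to be irreducible, so invoking it on ``any irreducible constituent'' can show that two constituents agree at almost all places, but cannot by itself rule out multiplicity or a pair of constituents differing at finitely many places -- you still need Moeglin to close the argument, and once one cites Moeglin the rest of that paragraph becomes redundant. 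The paper goes straight to Moeglin, which is cleaner; the Howe-duality/Flath material properly belongs to the later Theorem~\ref{TH:ExplicitCon}, where it is actually used to identify the unramified local components.
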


\newcommand{\unramnu}{\unram^{(\nu)}}
\newcommand{\chinu}{\chi^{(\nu)}}

All this will be done in subsection \ref{subsec:Fourier}. Finally,
the following main results (Theorems \ref{TH:ExplicitCon} and
\ref{TH:IsCAP}) will be proved in subsection
\ref{subsec:explicitcon}.
\begin{thm} \label{TH:ExplicitCon}
   For almost all $\nu$, the local component $\localpi$ of $\pi=\ThetaNcuspT{2n}{\OurChar}$
   is a constituent of $\Ind{\locGF}{\localB}{(\unramnu\cdot\chinu_\nT)}$ where
   $$\s=(-n,-n+1,-n+2,\ldots,-1,0,1,2,\dots,n-3,n-2,n-1)$$
   and $\unramnu$ is the unramified character of the standard Borel subgroup $\localB$ given by
   $$\unramnu\begin{pmatrix}
  t_1    &           &        &           &        &        \\
         & \ddots    &        & \mathbf{*}&        &        \\
         &           & t_{2n}    &           &        &        \\
         &           &        & t_{2n}^{-1}  &        &        \\
         & \mathbf{0}&        &           & \ddots &        \\
         &           &        &           &        & t_1^{-1}
\end{pmatrix}=\locti{1}{-n}\locti{2}{-n+1}\locti{3}{-n+2}\cdots\locti{2n}{n-1}$$
and $\chinu_\nT$ is given by
   $$\chinu_\nT\begin{pmatrix}
  t_1    &           &        &           &        &        \\
         & \ddots    &        & \mathbf{*}&        &        \\
         &           & t_{2n}    &           &        &        \\
         &           &        & t_{2n}^{-1}  &        &        \\
         & \mathbf{0}&        &           & \ddots &        \\
         &           &        &           &        & t_1^{-1}
\end{pmatrix}=\hilbsymbI{t_1 \cdots t_{2n}}{(-1)^n\det\nT}{\local \F}$$

\end{thm}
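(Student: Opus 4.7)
The plan is to identify $\localpi$ at almost all places $\nu$ (specifically non-archimedean $\nu \notin \nS$ with $T$, $\psi_\nu$, and the metaplectic cover all unramified) by constructing a spherical vector in $\localpi$ from the theta integral, and then reading off the Satake parameter via the explicit formulas for the Weil representation.

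First I would verify that $\localpi$ is unramified at such places. Choose the decomposable Schwartz function $\phi^0 = \restensor_\nu \phi_\nu^0$ with $\phi_\nu^0$ the characteristic function of the standard lattice $(\locOF^{2n})^{2n} \subset \nV^{2n}_{\localF}$ at each good finite place. This $\phi_\nu^0$ is invariant both under the maximal compact $K_\nu \subset \locGF$ (acting via the Weil representation) and under the maximal compact of $\locHF$. Since $\locOurChar$ is trivial at such $\nu$, the theta integral
\[
f(g) = \int_{\HAoverHF} \theta_{\phi^0}^{(2n)}(g,h)\, \OurChar(h)\, dh
\]
is a nonzero element of $\pi$ by Theorem \ref{TH:fourier}, and its $\nu$-th local component lies in the one-dimensional spherical line of $\localpi$. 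Thus $\localpi$ is unramified and embeds as the unramified constituent of some $\Ind_{\localB}^{\locGF}(\chi)$.

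Next I would compute the Satake character $\chi$ via Lemma \ref{lem:weilformulas}. For $t = \mathrm{diag}(a,(a^*)^{-1}) \in \locGF$ with $a = \mathrm{diag}(t_1,\ldots,t_{2n})$, formula (\ref{weil:i}) gives
\[
\Weil{2n}(t,1)\phi_\nu^0(x) = \hilbsymbI{t_1 \cdots t_{2n}}{(-1)^n \det T}{\localF}\, |t_1 \cdots t_{2n}|_{\localF}^{n}\, \phi_\nu^0(xa).
\]
Feeding this into the theta integral, using compactness of $\HAoverHF$ to interchange the sum over $\nV^{2n}(\F)$, the integration in $h$, and the local Weil action at $\nu$, one extracts
\[
f(t)/f(1) = \hilbsymbI{t_1 \cdots t_{2n}}{(-1)^n \det T}{\localF}\, |t_1 \cdots t_{2n}|_{\localF}^{n}.
\]
Since the spherical function of $\Ind_{\localB}^{\locGF}(\chi)$ satisfies $f_\chi(t)/f_\chi(1) = \chi(t)\,\delta_{\localB}^{1/2}(t)$, this identifies $\chi \cdot \delta_{\localB}^{1/2}$.

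Finally, the half sum of positive roots of $\SP[4n]$ yields $\delta_{\localB}^{1/2}(t) = |t_1|^{2n}|t_2|^{2n-1}\cdots |t_{2n}|^{1}$, so dividing gives
\[
\chi(t) = \hilbsymbI{t_1 \cdots t_{2n}}{(-1)^n \det T}{\localF}\, |t_1|^{-n}|t_2|^{-n+1}\cdots|t_{2n}|^{n-1} = \chinu_\nT(t)\,\unramnu(t),
\]
so $\localpi$ is a constituent of $\Ind_{\localB}^{\locGF}(\unramnu \cdot \chinu_\nT)$, as claimed. The main obstacle will be the rigorous derivation of the identity $f(t)/f(1) = \hilbsymbI{t_1 \cdots t_{2n}}{(-1)^n \det T}{\localF}\, |t_1 \cdots t_{2n}|^{n}$; this requires carefully tracking how the purely local Weil action on $\phi_\nu^0$ propagates through the global sum and the compact integration. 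Once this is in hand, the matching against $\delta_{\localB}^{1/2}$ to extract the exponents $(-n,-n+1,\ldots,n-1)$ and the Hilbert symbol twist is purely algebraic.
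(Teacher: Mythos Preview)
Your approach has a genuine gap at the step where you claim
\[
f(t)/f(1) \;=\; \hilbsymbI{t_1\cdots t_{2n}}{(-1)^n\det T}{\localF}\,\abs{t_1\cdots t_{2n}}_{\localF}^{\,n}.
\]
The function $f$ you construct is an automorphic form on $\GF\backslash\GA$; it is left $\GF$-invariant, not left $\localB$-equivariant. The relation $f_\chi(t)=\chi(t)\delta_{\localB}^{1/2}(t)f_\chi(1)$ that you invoke holds for the normalized $K_\nu$-fixed section in the \emph{model} $\Ind_{\localB}^{\locGF}\chi$, a space of functions on $\locGF$ satisfying a left Borel transformation law. These are different objects, and there is no passage between them by simply evaluating at $t$. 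Concretely, after you apply $\Weil{2n}(t,1)$ to $\phi^0_\nu$ you obtain the scalar you wrote times the characteristic function of a \emph{different} lattice; feeding this back into the global sum over $\nV_\F^{2n}$ and the integral over $\HAoverHF$ does not reproduce $f(1)$. The ``main obstacle'' you flag is not a technicality but an actual obstruction: the identity is false as stated.

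The paper extracts the inducing character in a different way. From the nonvanishing of the global theta pairing it obtains, for every place $\nu$, a nonzero element of $\Hom_{\locGF\times\locHF}(\locWeil{2n},\,\localpi\otimes\locOurChar)$. At almost all $\nu$ one has $\locOurChar=1$, so $\localpi$ is the local theta lift of the trivial character. One then writes down an explicit $\locGF\times\locHF$-intertwining map
\[
r(\phi)(g)\;=\;\locWeil{2n}(g,1,1)\phi(0,\ldots,0),
\]
checks directly from Lemma~\ref{lem:weilformulas} that $r(\phi)(bg)=\delta_{\localB}^{1/2}(b)\,\muo(b)\,r(\phi)(g)$ with
\[
\muo(b)=\hilbsymbI{\det a}{(-1)^n\det T}{\localF}\,\abs{\det a}_{\localF}^{\,n}\,\delta_{\localB}^{-1/2}(b),
\]
so that $r$ lands in $\Ind_{\localB}^{\locGF}\muo$, and finally invokes local Howe duality to conclude that $\localpi$ is a constituent of this induced representation. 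The $\delta_{\localB}^{-1/2}$ computation you did at the end is the same and is correct; what is missing from your argument is a legitimate bridge from the local Weil representation to the induced representation, and evaluation of the automorphic form on the torus does not provide one.
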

\begin{rem}
   Notice that in this particular theorem $\local{\s}=\s$ does not
   depend on $\nu$.
\end{rem}

\newcommand{\ufA}{\units{\fA}}

Denote by $\chi_\nT$ the character of $\uF\backslash\ufA$ given by
$\chi_\nT(x)=\prod_\nu\lochilbsymb{\local x}{(-1)^n\det
T}=\prod_\nu\chinu_\nT(\local x)$. Denote also
$\unram(x)=\prod_\nu\unramnu(\local x)$ for $x\in\ufA$. This is a
character of $\uF\backslash\ufA$.

\begin{thm} \label{TH:IsCAP}
   $\ThetaNcuspT{2n}{\OurChar}$ is a CAP representation of $\GA$
   with respect to B and $\tau$, where B is the Borel subgroup of $\GA$
   and $\tau=\unram\chi_\nT$.
\end{thm}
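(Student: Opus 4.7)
The plan is to reduce the theorem to Theorems \ref{TH:ThetaRepCuspidal} and \ref{TH:ExplicitCon} once we verify that $\tau = \unram \cdot \chi_\nT$ is a bona fide automorphic character of the Levi $M(\fA)$ of the Borel $B \subset \GG[4n]$. Since $M$ is the diagonal torus, isomorphic to $(\GL_1)^{2n}$, characters of $M(\fA)$ are automatically irreducible, and cuspidality of $\tau$ is vacuous (there is no unipotent radical inside $M$). Thus the only nontrivial requirement on $\tau$ is that it be trivial on $M(\F) = (\uF)^{2n}$.

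To verify this, I would treat the two factors separately. For $\unram$, restricted to the diagonal torus with entries $(t_1,\dots,t_{2n},t_{2n}^{-1},\dots,t_1^{-1})$, the global character is $\prod_i |t_i|_{\fA}^{s_i}$ with $\s$ as in Theorem \ref{TH:ExplicitCon} (note that $\s$ is independent of $\nu$, so the local pieces assemble into a global character). The product formula $\prod_\nu |x|_\nu = 1$ for $x \in \uF$ gives $\unram|_{M(\F)} = 1$. For $\chi_\nT$, which on the torus evaluates as $\prod_\nu \lochilbsymb{\,t_1 \cdots t_{2n}\,}{(-1)^n \det \nT}$, triviality on $M(\F)$ follows from Hilbert reciprocity: $\prod_\nu \lochilbsymb{x}{a} = 1$ for $x,a \in \uF$, applied with $x = t_1\cdots t_{2n}$ and $a = (-1)^n \det \nT \in \uF$. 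Extending $\tau$ trivially across the unipotent radical $N$ of $B$ then gives an automorphic character of $B(\fA)$ in the required sense.

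With $\tau$ so understood, the CAP property follows immediately from the two earlier main results. Theorem \ref{TH:ThetaRepCuspidal} supplies the fact that $\pi = \ThetaNcuspT{2n}{\OurChar}$ is an irreducible, automorphic, cuspidal representation of $\GA$. Theorem \ref{TH:ExplicitCon} states that for almost all $\nu$, the local factor $\localpi$ is a constituent of $\Ind{\locGF}{\locBF}{(\unramnu \cdot \chinu_\nT)}$, which is precisely $\Ind{\locGF}{\locBF}{\localtau}$ for the $\tau$ we have just assembled. This matches the definition of a CAP representation with respect to $B$ and $\tau$ verbatim.

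The main obstacle is not in this theorem itself but in the preceding Theorem \ref{TH:ExplicitCon}, where the local components $\localpi$ of the theta lift had to be identified explicitly in terms of the Hilbert symbol. Granting that result, the only real mathematical content of the present statement is the automorphy verification sketched above, where the conceptual input is Hilbert reciprocity (together with the product formula for the normalized absolute value), both of which conspire to make the seemingly ``local'' data $\unram\chinu_\nT$ glue into a Hecke character on the torus.
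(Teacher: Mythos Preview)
Your proposal is correct and follows the same approach as the paper: invoke Theorem~\ref{TH:ThetaRepCuspidal} for irreducibility and cuspidality, then Theorem~\ref{TH:ExplicitCon} for the local constituent condition at almost all places. You supply the automorphy check for $\tau$ (via the product formula and Hilbert reciprocity) that the paper asserts without proof just before the theorem statement, but otherwise the arguments are identical.
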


\subsection{Calculation of Fourier Coefficients} \label{subsec:Fourier}

Consider the orthogonal group $\HA$. Let $\sigma$ be a cuspidal
representation of $\HA$. Consider the following series of dual
pairs, together with their associated theta-lifts.


$$
\begin{array}{c@{\quad-\quad}c}
  \vdots & \vdots \\
  \OurDualPair[4n+2] & \ThetaNcusp{2n+1}{\sigma} \\
  \OurDualPair[4n] & \ThetaNcusp{2n}{\sigma} \\
  \OurDualPair[4n-2] & \ThetaNcusp{2n-1}{\sigma} \\
  \vdots & \vdots \\
  \OurDualPair[6] & \ThetaNcusp{3}{\sigma} \\
  \OurDualPair[4] & \ThetaNcusp{2}{\sigma} \\
  \OurDualPair[2] & \ThetaNcusp{1}{\sigma}
\end{array}
$$


\begin{thm}[Rallis]\label{TH:Rallis}
   Let $\sigma$ be an irreducible automorphic cuspidal representation of $\HA$.
   Then the smallest $k\in\fN$ such that
   $$\ThetaNcusp{k}{\sigma}\neq0$$
   satisfies that the representation $\ThetaNcusp{k}{\sigma}$ is
   cuspidal. Moreover,
   $$\ThetaNcusp{2n}{\sigma}\neq0$$
\end{thm}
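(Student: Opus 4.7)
The plan is to establish the two assertions separately, each by the classical argument from the theory of the theta tower. The first assertion (cuspidality at the first occurrence) is the so-called \emph{tower property}, and the second assertion (non-vanishing at the top) will follow from Rallis' inner product formula in the stable range.

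\textbf{Cuspidality at the first occurrence.} Let $k$ be minimal with $\ThetaNcusp{k}{\sigma}\neq 0$; I want to show every nontrivial constant term of $\ThetaNcusp{k}{\sigma}$ vanishes. For $1\leq j\leq k$, let $\nP_j\IN\Spii{2k}{}$ be the standard maximal parabolic with Levi $\GL{j}\times\Spii{2(k-j)}{}$ and unipotent radical $\nN_j$. The main computation is the standard \emph{unfolding of the theta kernel along $\nN_j$}: using the explicit formulas of Lemma~\ref{lem:weilformulas} for $\Weil{k}$ on elements of the form $\unib$, and the Poisson summation hidden in the sum $\Sum{\bv\in\VF{k}}$, one rewrites
\begin{eqnarray*}
  \int_{\nN_j(\F)\backslash\nN_j(\fA)} \thetafunc{k}(n g,h) \D{n}
  &=& \thetafunc{k-j}_{\phi'}(g',h)
\end{eqnarray*}
for an appropriate $\phi'\in\nS(\nV^{k-j}_\fA)$ built from $\phi$, with $g'$ the image of $g$ in $\Spii{2(k-j)}{}$. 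Integrating this identity against $\OurChar$ (or against the cusp forms spanning $\sigma$) over $\HAoverHF$ then identifies the constant term of $\ThetaNcusp{k}{\sigma}$ along $\nP_j$ with a vector in $\ThetaNcusp{k-j}{\sigma}$, which vanishes by the minimality of~$k$. This forces $\ThetaNcusp{k}{\sigma}$ to be cuspidal.

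\textbf{Non-vanishing at $k=2n$.} Here the dual pair is $(\nO(\nV),\Spii{4n}{})$ with $\dim\nV=2n$, i.e.\ we sit at the edge of the stable range. The plan is to apply Rallis' inner product formula to the doubled pair $(\nO(\nV),\Spii{8n}{})$: for cusp forms $\nf_1,\nf_2\in\sigma$ and Schwartz $\phi_1,\phi_2\in\nS(\nV^{2n}_\fA)$, the Petersson inner product
\[
 \Int{\Gadele{\Spii{4n}{}}}\thetafunc[\phi_1]{2n}(g,\cdot)\ast\overline{\thetafunc[\phi_2]{2n}(g,\cdot)}\D{g}
\]
(after pairing against $\nf_1,\bar{\nf_2}$) unfolds to a product of local zeta integrals times a value (or residue) of the standard $L$-function of $\sigma$. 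In the stable range these local integrals are absolutely convergent, and one verifies non-vanishing by choosing $\phi_{1,\nu}=\phi_{2,\nu}$ to be suitably supported bump functions at every place and a spherical choice at almost all finite places. The global $L$-factor appearing is known to be non-zero at the point of evaluation. This yields a nonzero inner product, and in particular $\ThetaNcusp{2n}{\sigma}\neq 0$.

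\textbf{Main obstacle.} The tower/unfolding step of part~(a) is bookkeeping once the Weil representation formulas in Lemma~\ref{lem:weilformulas} are in hand. The genuine difficulty is part~(b): justifying the interchange of summation and integration at the boundary of the stable range, and ruling out the possibility that every admissible local choice of $\phi$ produces a vanishing local zeta integral. Both points are handled exactly as in Rallis' original argument (cf.~\cite{M}), and it is these technicalities that make $k=2n$ (rather than some strictly larger $k$) the correct and sharp value for the statement.
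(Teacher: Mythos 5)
The paper gives no argument of its own --- the proof is a citation to \cite{Ral1}. Your sketch of the tower property in part~(a), namely that constant terms along standard parabolics drop to theta kernels on lower members of the tower and minimality of $k$ then kills them, is the argument Rallis gives, so on that part you are consistent with what the paper references.

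The non-vanishing at $k=2n$ is a genuinely different route from Rallis', and your version has two real problems. Rallis --- and this paper itself, in its special case (Theorem~\ref{TH:fourier}(1)) --- prove non-vanishing by exhibiting a non-degenerate Fourier coefficient of $\ThetaNcusp{2n}{\sigma}$ directly: the set of $\bv\in\nV_\F^{2n}$ with Gram matrix $\nT$ is a single $\HF$-orbit with trivial stabilizer, so the Fourier coefficient unfolds to an orbital integral over $\HA$, which is made non-zero by an explicit choice of Schwartz data. You propose instead to use the Rallis inner product formula. First, the doubled pair you write down is the wrong one: to pair two theta lifts that live on $\GA$, one integrates $\theta_{\phi_1}\overline{\theta_{\phi_2}}$ over $\GAoverGF$, and that is a theta integral for the dual pair formed by $\GG$ together with $\nO(\nV\oplus(-\nV))\cong\nO_{4n}$; the doubling is on the orthogonal side, not $(\nO(\nV),\tn{Sp}_{8n})$. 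Second, at $k=2n$ that theta integral sits far outside Weil's absolute-convergence range, so the Siegel--Weil input must be the regularized version of Kudla and Rallis, and what appears is a residue of a Siegel Eisenstein series on the doubled orthogonal group rather than a value of the standard $L$-function at a point where non-vanishing can simply be read off. Saying ``the global $L$-factor appearing is known to be non-zero at the point of evaluation'' elides exactly this difficulty. The Fourier-coefficient argument of Rallis and of the paper bypasses all of it, which is why that is the proof the cited reference actually gives.
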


\begin{proof}
   See \cite{Ral1}.
\end{proof}

\begin{rem}
   Theorem \ref{TH:Rallis} is valid for any orthogonal group, i.e. T
   need not be anisotropic.
\end{rem}


\par
So here is the plan for the rest of this section. Ultimately, the
reductive dual pair $\OurDualPair$ will be our main concern. The
non-trivial character $\OurChar$ on $\HAoverHF$ as defined in
subsection \ref{subsec:setup} defines a one dimensional automorphic
form on $\HA$. However, at this point, we will consider the family of dual
pairs $\OurDualPair[2k]$, for every $1\leq k\leq 2n$.


We will now proceed with the proof of Theorem \ref{TH:fourier}.

\begin{proof} \textit{(Theorem \ref{TH:fourier})}
We will begin by proving assertion (1) of the theorem. Note that, in
particular, assertion (1) follows from Theorem \ref{TH:Rallis}.
Nevertheless, we are going to to prove assertion (1) here. Moreover,
the proof of assertion (2) will be very similar to the proof of
assertion (1).

\newcommand{\Af}[1][\phi]{\nA^{#1}}

Consider the integral
$$\func{\Af}{g}=
\HInt{\funcIII{\thetafuncT{2n}}{g}{h}{1}\func{\xi}{h}}$$ where
$$\funcIII{\thetafuncT{2n}}{g}{h}{1}\eqdef\thetakernel{2n}{g}{h}{\bfv}$$
Note that this integral is well-defined since $\HAoverHF$ is
compact and the integrand is continuous. Recall that the
compactness of $\HAoverHF$ follows from the fact that the matrix T
is anisotropic. The compactness of $\HAoverHF$ will allow us to
exchange the order of summation and integration in what follows.


\newcommand{\Zmat}[1][2n]{\mativ{\nI_{#1}}{\nz}{}{\nI_{#1}}}
Consider the Abelian group of unipotent matrices given by
$$\nZ\eqdef\defset{g=\Zmat}{\tr{(\wn\nz)}=\wn\nz}$$
This is the unipotent radical of the Siegel parabolic subgroup of
$\tn{Sp}_{4n}$. For each $\nC~=~\tr{\nC}$, ($2n\times 2n$
symmetrical matrix with coordinates in $\F$), we associate a
character
$$\Achar[\nC]{\Zmat}=\ACHAR{C\wn z}$$

and since $\tn{tr}(C\wn Z)\in\F$  for $z\in\nZ(\F)$ and $\psi$ is
trivial on $\F$ we have that $\achar[\nC]$ is a character of
$\ZAoverZF$. Moreover, \underline{any} character of $\ZAoverZF$ has
the form $\achar$, for a uniquely defined
$\nC=\tr{\nC}\in\nM_{2n}(\F)$

\newcommand{\fourcoef}[1][\nC]{\Af_{#1,\psi_{#1}}}
\newcommand{\fourcoefphi}[2][\nC]{\Af[#2]_{#1,\psi_{#1}}}

$\Af(g)$ is a continuous function on $\GAoverGF$ and by Abelian
Fourier Analysis (see \cite{Rud}), we may expand $\Af$ in a
Fourier series along $\nZ$, since $\ZAoverZF$ is a compact,
Abelian group. For each $g\in\GA$ we consider the continuous
function on $\ZAoverZF$
$$z\mapsto\Af(\Zmat g)$$
and we now use Fourier analysis on the compact subgroup
$\ZAoverZF$.

Thus
$$\Af(\Zmat g)=\sum_{C=\tr{C}\in\MF{2n}}\Af_{C,\psi_C}(g)\Achar[C]{\Zmat}\qquad \Zmat\in\ZA,\ g\in\GA$$
where
$$\fourcoef(g)\eqdef\ZInt{\Af(\Zmat g)\invAchar[C]{\Zmat}}$$

Moreover, $\Af\equiv0$ if and only if $\fourcoef\equiv0$ for all
$\nC=\tr{\nC}$.
 We will now prove the following:
$$\Af_{\nT,\achar[T]}\not\equiv 0$$
and this will imply
$$\Af\not\equiv 0$$
(Choose $\nC=\nT$.) We will show that there is $\phi$ such that
$$\fourcoef[\nT](\nI_{4n})\not\equiv0$$
\seteqi{\Af_{\nT,\achar[T]}(\nI_{4n})}
\seteqii{\ZInt{\HInt{\thetafuncT{2n}(\Zmat,h,1)\OurChar(h)}\invACHAR{T\wn
z}}}

\seteqiii{\ZInt{\HInt{\thetakernel{2n}{\Zmat}{h}{\bfv}\OurChar(h)}\invACHAR{T\wn
z}}}

\seteqiiii{\ZInt{\HInt{\thetakerneli{2n}{\Zmat}{1}{\bfv}{1}{h}\OurChar(h)}\invACHAR{T\wn
z}}}
\seteqiiiii[\stackit{\tn{Lem}}{\ref{lem:weilformulas}}]{\ZInt{\HInt{\thetakernelii{2n}{\ACHAR{\Gr(\bfv)\wn
z}}{\bfv}{1}{h}\OurChar(h)}\invACHAR{T\wn z}}}
\seteqiiiiii{\HInt{\thetakernel{2n}{1}{h}{\bfv}\OurChar(h)}\ZInt{\ACHAR{(\Gr(\bfv)-T)\wn
z}}}
\seteqiiiiiii[(*)]{\HInt{\thetakernelj{2n}{1}{h}{\bfv}{\stackrel{\bfv\in\VF{2n}}{\Gr(\bfv)=\nT}}\OurChar(h)}}

\showeqiiiiiii{-40pt}

The last transition follows from the fact that, for
$\alpha\in\nM_{2n}(\F)$, such that $\tr{(\wn\alpha)}=\wn\alpha$
$$\ZInt{\ACHAR{\alpha z}}=\Braceii{1}{\tn{if }\alpha=0}{0}{\tn{otherwise}}$$

The other transitions are fairly straightforward. At first we simply
apply the definition of $\thetafuncT{2n}$, followed by the next
three transitions in which we use fundamental properties of
$\Weil{2n}$. However, the following transition in which we change
the order of integration and summation, needs more explanation. At
first, at least formally, we switch between $\D{h}$ and $\D{z}$,
next we switch the order of $\D{z}$ and $\Sigma_{\bfv}$. This is all
permissible since the integrals we get in each case converge
absolutely. Indeed, if we take the measure of $\ZAoverZF$ to be 1,
then we have
\seteqi{\HInt{\Sum{\bfv\in\VF{2n}}\ZInt{\abs{\funcIII{\Weil{2n}}{1}{h}{1}\func{\phi}{\bfv}\OurChar(h)\ACHAR{(\Gr(\bfv)-T)\wn
z}}}}} \seteqii[\stackit{\tn{Since
}}{\abs{\OurChar(\cdot)}=\abs{\psi(\cdot)}=1}]
{\HInt{\Sum{\bfv\in\VF{2n}}\ZInt{\abs{\funcIII{\Weil{2n}}{1}{h}{1}\func{\phi}{\bfv}}}}}
\seteqiii{\HInt{\Sum{\bfv\in\VF{2n}}\abs{\func{\phi}{\inv{h}\cdot\bfv}}}<\infty}
\showeqiii{-20pt}

which follows from the fact that
$h\mapsto\Sum{\bfv\in\VF{2n}}\abs{\func{\phi}{\inv{h}\cdot\bfv}}$
is continuous and $\HAoverHF$ is compact.

Let $\set{f_1,\ldots,f_{2n}}$ be an orthogonal basis such that
$\diag((f_1,f_1)_{\nT},\ldots,(f_{2n},f_{2n})_{\nT})~=~\nT$ for
$\nV$. Let
$$\vnot=\colvec{f_1}{f_{2n}}$$
Therefore $Gr(\vnot)=((f_i,f_j)_{\nT})_{i,j}=\diag\rowvec{\t_1}{\t_n}=\nT$,
where as one may recall, $\Gr(\vnot)$ denotes the Gram matrix of
$\vnot$.
\par
It follows from Witt's theorem that the set
$\defset{\bfv\in\VF{2n}}{\Gr(\bfv)=\nT}$ forms an orbit
under $\HF$ with a trivial stabilizer.\\
Therefore there exists a $\gamma\in\HF$ such that
$\bfv=\inv{\gamma}\cdot\vnot$
\\
Therefore the integral (*) is equal to
\seteqi{\HInt{\thetakernelj{2n}{1}{h}{\inv{\gamma}\cdot\vnot}{\gamma\in\HF}\OurChar(h)}}
\seteqii[\inbrackets{\tn{Lemma
}\ref{lem:weilformulas}}]{\HInt{\thetakernelj{2n}{1}{\gamma
h}{\vnot}{\gamma\in\HF}\OurChar(h)}}
\seteqiii{\HInt{\thetakernelj{2n}{1}{\gamma
h}{\vnot}{\gamma\in\HF}\OurChar(\gamma h)}}
\seteqiiii{\XInt{\Lmana{\HA}{\set{1}}}{\Weil{2n}(1,h,1)\phi(\vnot)\OurChar(h)}{h}}
\seteqiiiii[\inbrackets{\tn{Lemma }\ref{lem:weilformulas}}]
           {\XInt{\HA}{\phi(\inv{h}\cdot\vnot)\OurChar(h)}{h}}
\seteqiiiiii[\stackit{\tn{Assuming that }
\phi=\prod_\nu\local{\phi}}{\tn{is decomposable}}]
{\prodnu{}{\locXInt{\locHF}{\local{\phi}(\inv{h}\cdot\vnot)\locOurChar(h)}}{h}}

\showeqiiiiii{-30pt}


We show that there exists a $\phi\in\glbSchwartz$ such that
$$\prodnu{}{\locXInt{\locHF}{\local{\phi}(\inv{h}\cdot\vnot)\locOurChar(h)}}{h}\neq0$$

\newcommand{\locprod}[2]{\prodnu{#1}{\locXInt{\locHF}{\local{\phi}(\inv{h}\cdot\vnot)#2}}{h}}
\newcommand{\locprodH}[2]{\prodnu{#1}{\locXInt{\local{\nH}}{\local{\phi}(\inv{h}\cdot\vnot)#2}}{h}}

\seteqi{\locprod{}{\locOurChar(h)}}

\seteqii{\locprod{\in\nS}{\det(h)}\cdot\locprod{\not\in S}{1}}

\seteqiii[\stackit{\tn{For }\nu\not\in S}{\tn{let
}\localphi=\chi_{\nV(\locOF)}^{2n}}]{\locprod{\in\nS}{\det(h)}\cdot1}

\seteqiiii[\stackit{\tn{Choose small neighborhood
}\local{\nH}\tn{of I}}{\tn{s.t.}\det h=1, h\in\local{\nH}.\tn{Let
}\supp\localphi\IN\local{\nH}}]{\locprodH{\in\nS}{\det(h)}}

\seteqiiiii[(**)]{\locprodH{\in\placeFinf\bigcap\nS}{}\cdot\locprodH{\in(\placeF-\placeFinf)\bigcap\nS}{}}

\newcommand{\Ai}{\tn{For }\nu\in\placeFinf\bigcap\nS,\tn{ let
}\localphi\tn{ be any positive Schwartz function, supported in
}\local{\nH}}
\newcommand{\Aii}{\tn{For }\nu\in(\placeF-\placeFinf)\bigcap\nS,\tn{ let }\localphi,
   \tn{ be the characteristic function of }\local{\nH}\cdot\vnot}

\showeqiiiii{-40pt}

where in the second transition we used the fact that we may choose
the measure
$$\meas(\defset{h\in\prod_{\nu\not\in\nS}\locHF)}{\inv{\local{h}}\in\nV^{2n}(\locOF)}$$
to be equal to one.

And if we let
\begin{itemize}
  \item $\Ai$
  \item $\Aii$
\end{itemize}
then (**) is non-zero (since it is positive).

We will now prove assertion (2) of Theorem \ref{TH:fourier}. We
will begin by proving that $\fourcoef(1)=0$ for all $\nC=\tr{\nC}$
of size $k\times k$, with coordinates in $\F$ and
$\phi\in\nS(\nV_{\fA}^k), k<2n$. This will imply that for every
$g\in\GA$, we have
$$\fourcoef(g)=\fourcoefphi{\Weil{k}(g,1,1)\phi}(1)=0$$

\newcommand{\psiC}{\psi_{\nC}}

The proof of (2), coincides with the proof of (1), up to the point
where we describe the stabilizer of an element in
$\defset{v\in\nV_\F^k}{\tn{Gr}(v)=\nC}$. In (1), the stabilizer
was trivial and in (2) it won't be. This fact will imply that the
Fourier coefficient is zero, for all $1\leq k< 2n$. Thus,
repeating our previous calculations (with C replacing T, of
course) we get

\seteqi{\ZInt{\HInt{\thetafunc{k}(\Zmat,h,1)\OurChar(h)}\invACHAR{C\wn
z}}}
\seteqii{\ZInt{\HInt{\thetakernel{k}{\Zmat}{h}{\bfv}\OurChar(h)}\invACHAR{C\wn
z}}}
\seteqiii{\ZInt{\HInt{\thetakerneli{k}{\Zmat}{1}{\bfv}{1}{h}\OurChar(h)}\invACHAR{C\wn
z}}}
\seteqiiii[\stackit{\tn{Lem}}{\ref{lem:weilformulas}}]{\ZInt{\HInt{\thetakernelii{k}{\ACHAR{\Gr(\bfv)\wn
z}}{\bfv}{1}{h}\OurChar(h)}\invACHAR{C\wn z}}}
\seteqiiiii{\HInt{\thetakernel{k}{1}{h}{\bfv}\OurChar(h)}\ZInt{\ACHAR{(\Gr(\bfv)-C)\wn
z}}}
\seteqiiiiii[(**)]{\HInt{\thetakernelj{k}{1}{h}{\bfv}{\stackrel{\bfv\in\VF{k}}{\Gr(\bfv)=C}}\OurChar(h)}}

\showeqiiiiii{-20pt}

\newcommand{\HFo}{\HF^{\boldmath{v^0}}}
\newcommand{\HAo}{\HA^{\boldmath{v^0}}}
\newcommand{\vnoti}[1]{v^0_{#1}}
\newcommand{\Fvnoti}[1]{\F v^0_{#1}}

where the second to last transition in which we exchanged order of
integration/summation is justified exactly as before.
\\
It follows from Witt's theorem that the set
$\defset{\bfv\in\VF{k}}{\Gr(\bfv)=\nC}$ forms an orbit under
$\HF$.

\newcommand{\uni}[1]{\mativ{\nI_{k}}{#1}{}{\nI_{k}}}

We may assume that the symmetric matrix C is diagonal. This follows
from the fact that there is an $a\in\tn{GL}_k(\F)$ such that
$$a\cdot\nC\cdot\tr{a}=\diag(c_1,\ldots,c_k)$$
and that
$$\Af_{\achar}\mativ{\wn\tr{a}\wn}{}{}{\inv{a}}=\Af_{\achar[a\cdot\nC\cdot\tr{a}]}(\nI)$$
Thus, we may assume that
$\nC=\diag(c_1,\ldots,c_r,\underbrace{0,\ldots,0}_{k-r\tn{ times}})$
where $\rank\nC=r$, i.e. $c_1,\ldots,c_r\in\Fstr$. Then
$\Gr(\bfv)=\nC$ means that there exists
$v=(v_1,\ldots,v_k)\in\nV_\F^k$ such that
\begin{eqnarray*}
\orthform{v_i}{v_j} & = & 0\tn{ for }1\leq i\neq j\leq k \\
\orthform{v_i}{v_i} & = & 0\tn{ for }r+1\leq i\leq k \\
\orthform{v_i}{v_i} & = & c_i\tn{ for }1\leq i\leq r
\end{eqnarray*}
Since $\orthform{\cdot}{\cdot}$ is an anisotropic quadratic form,
$\orthform{v_i}{v_i}=0\iff v_i=0$. Thus
$v=(v_1,\ldots,v_r,0,\ldots,0)$.

\newcommand{\wo}[1]{w^0_{#1}}
Choose a representative
$\vnot=(\wo{1},\ldots,\wo{r},\underbrace{0,\ldots,0}_{k-r\tn{
times}})$, and let $\HFo$ denote its stabilizer in $\HF$. Then
$$\HFo=\defset{g\in\HF}{g\wo{i}=\wo{i},\quad 1\leq i\leq r}$$
Let $\nV^0\eqdef\SpanF\set{\wo{1},\ldots,\wo{r}}$ and complete
$\set{\wo{1},\ldots,\wo{r}}$ to an orthogonal basis of V:
$\set{\wo{1},\ldots,\wo{r};\wo{r+1},\ldots,\wo{2n}}$ (Its Gram
matrix will have the form
$\diag(c_1,\ldots,c_r,d_{r+1},\ldots,d_{2n})$). Let
$$\nV_0\eqdef\SpanF\set{\wo{r+1},\ldots,\wo{2n}}$$
Then $\nV=\nV^0\oplus\nV_0$ and $\HFo\cong\nO(\nV_0)$ where the
isomorphism is given by
$$g\in\HFo\mapsto g|_{\nV_0}\in\nO(\nV_0).$$

Now since $k<2n$, this implies $r \leq k < 2n$, hence
$\dimF\nV_0>1$.


Therefore, since, by its definition, $\OurChar$ is a non-trivial
character, when restricted to $\nO(\nV_0)_\fA$, we get
$$\Int{\HFo\backslash\HAo}\OurChar(r)\tn{d}r=0$$
Note also that the integral on the left-hand side makes sense
since $\HFo\backslash\HAo$ is always compact.

\par

Returning to our calculation of our Fourier coefficient we now
have
\seteqi{\HInt{\thetakernelj{2k}{1}{h}{\bfv}{\stackrel{\bfv\in\VF{2k}}{\Gr(\bfv)=C}}\OurChar(h)}}

\seteqii{\HInt{\thetakernelj{2k}{1}{h}{\inv{\gamma}\cdot\vnot}{\gamma\in\HFo\backslash\HF}\OurChar(h)}}

\seteqiii[\stackit{\tn{Lem}}{\ref{lem:weilformulas}}]{\HInt{\thetakernelj{2k}{1}{\gamma
h}{\vnot}{\gamma\in\HFo\backslash\HF}\OurChar(h)}}

\seteqiiii{\HInt{\thetakernelj{2k}{1}{\gamma
h}{\vnot}{\gamma\in\HFo\backslash\HF}\OurChar(\gamma h)}}

\seteqiiiii{\Int{\HFo\backslash\HA}\Weil{2k}(1,h,1)\phi(\vnot)\OurChar(h)\D{h}}

\seteqiiiiii{\Int{\HAo\backslash\HA}\Int{\HFo\backslash\HAo}\Weil{2k}(1,rh,1)\phi(\vnot)\OurChar(rh)\D{r}\D{h}}

\seteqiiiiiii[\stackit{\tn{Lem}}{\ref{lem:weilformulas}}]{\Int{\HAo\backslash\HA}\Int{\HFo\backslash\HAo}\Weil{2k}(1,h,1)\phi(\inv{r}\cdot\vnot)\OurChar(rh)\D{r}\D{h}}

\seteqiiiiiiii{\Int{\HAo\backslash\HA}\Int{\HFo\backslash\HAo}\Weil{2k}(1,h,1)\phi(\vnot)\OurChar(rh)\D{r}\D{h}}

\seteqix{\Int{\HAo\backslash\HA}\Weil{2k}(1,h,1)\phi(\vnot)\OurChar(h)\D{h}\Int{\HFo\backslash\HAo}\OurChar(r)\D{r}}

\seteqx{0}

\showeqx{-20pt}

\end{proof}

\begin{proof}[\textit{Proof of Theorem \ref{TH:ThetaRepCuspidal}}]
   The cuspidality of
   $\ThetaNcuspT{2n}{\OurChar}$ follows from Theorem \ref{TH:fourier} and Theorem
   \ref{TH:Rallis} (Rallis' tower property). The irreducibility of
   $\ThetaNcuspT{2n}{\OurChar}$ follows from a general theorem of
   Moeglin \cite{M} stating that if $\ThetaNcuspT{k}{\sigma}$ (for an irreducible cuspidal
   $\sigma$) is cuspidal, then $\ThetaNcuspT{k}{\OurChar}$ is
   irreducible.
\end{proof}

\subsection{Explicit Construction of our Representation} \label{subsec:explicitcon}
\newcommand{\Tform}[2]{\TT(#1,#2)}
\newcommand{\TFORM}[2]{\INPROD{\thetaintT[#1]}{#2(g)}}
\newcommand{\Vwave}{\widetilde{\nV}}

\begin{proof}[Proof of Theorem \ref{TH:ExplicitCon}]

Let
$$\pi\eqdef\ThetaNcuspT{2n}{\OurChar}$$

According to Flath (see \cite{F}), for every place $\nu$ there
exists an irreducible representation $\localpi$ of $\locGF$ such
that
$$\pi\cong\tensor_{\nu}'\localpi$$
where almost all $\localpi$ are unramified representations of
$\locGF$, i.e. $\localpi$ is a constituent of
$\Ind{\locGF}{\local{B}}{\localmu}$ where $\localmu$ is an
unramified character, which means $\localmu$ is of the form

$$\localsmu\begin{pmatrix}
  t_1    &           &        &             &        &        \\
         & \ddots    &        & \mathbf{*}  &        &        \\
         &           & t_{2n} &             &        &        \\
         &           &        & t_{2n}^{-1} &        &        \\
         & \mathbf{0}&        &             & \ddots &        \\
         &           &        &             &        & t_1^{-1}
\end{pmatrix}=\onetwon{}{\abstinu{1}} {\abstinu{2}} {\abstinu{2n}}\qquad,\isin{\subII{s}{i}{\nu}} {\fC}$$

   Let
   $$\pivarphi(g)\eqdef\thetafuncT{2n}(g)\eqdef\thetaintT$$
   be a cusp form in the space $\pi$,
   such that $\pivarphi\not\equiv0$. Such a non-zero $\pivarphi$ exists
   according to Theorem \ref{TH:fourier}(1). Therefore,
   \begin{equation*}
      0\neq\Int{\GAoverGF} \abs{\pivarphi(g)}^2dg\\
      =\INPROD{\thetaintT}{\pivarphi(g)}
   \end{equation*}

   Now an automorphic realization of the contragredient
   representation $\pihat$ is given by
   $$\pihat\cong\pibar=\set{g\mapsto\overline{\pivarphi(g)}}$$
   where the action is given by right translation.

   Thus we have a non-trivial bilinear form $\map{\TT}{\glbSchwartz\times\Vpihat}{\fC}$ given by
   $$0\not\equiv\Tform{\phi}{\overline{\pivarphi}}\eqdef\TFORM{\phi_1}{\pivarphi}\qquad(***)$$

   We will now prove the following easy lemma
   \begin{lem}\label{lem:hom}
      \begin{enumerate}
         \item The bilinear form
         $\map{\TT}{\glbSchwartz\times\Vpihat}{\fC}$ given by (***)
         is an intertwining operator in
         $\Hom{\GA\times\HA}{\Weil{2n}\tensor\pihat}{1_{\GA}\tensor\OurChar}$.
         \item
         $\Hom{\GA\times\HA}{\Weil{2n}\tensor\pihat}{1_{\GA}\tensor\OurChar}\neq0$.
         \item
         $\Hom{\locGF\times\locHF}{\locWeil{2n}\tensor\localpihat}{1_{\locGF}\tensor\locOurChar}\neq0$
         for all places $\nu$.
      \end{enumerate}

   \end{lem}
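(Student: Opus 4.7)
The plan is to establish all three assertions by unwinding the definitions, with only elementary changes of variables and the standard factorization of Hom-spaces for restricted tensor products.

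For part (1), I would carry out a direct computation. Fix $(g_0,h_0)\in\GA\times\HA$. By the multiplicativity of $\Weil{2n}$ on the theta kernel, one has $\thetafuncT[\Weil{2n}(g_0,h_0)\phi]{2n}(g,h,1)=\thetafuncT[\phi]{2n}(gg_0,hh_0,1)$. Substituting this into the double integral defining $\TT(\Weil{2n}(g_0,h_0)\phi,\pibar(g_0)\overline{\pivarphi})$ and making the change of variables $h\mapsto hh_0^{-1}$ in the inner integral over $\HAoverHF$ (using unimodularity of $\HA$ and the fact that $\OurChar$ is a character) pulls out a factor $\OurChar(h_0)^{-1}$. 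A right translation $g\mapsto gg_0^{-1}$ in the outer integral over $\GAoverGF$ then absorbs $g_0$. Since $\OurChar$ is $\{\pm1\}$-valued we have $\OurChar(h_0)^{-1}=\OurChar(h_0)$, which gives the intertwining identity
$$\TT(\Weil{2n}(g_0,h_0)\phi,\,\pibar(g_0)\overline{\pivarphi})=\OurChar(h_0)\,\TT(\phi,\overline{\pivarphi}).$$

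Part (2) is then immediate: equation $(***)$ asserts $\TT\not\equiv 0$, and by (1) it is an element of the Hom-space in question, so that space is non-zero.

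For part (3), the plan is to invoke the standard factorization of Hom-spaces for restricted tensor products of irreducible admissible representations. Since $\Weil{2n}\cong\restensor_\nu\locWeil{2n}$ (Section \ref{sec:weil}), $\pihat\cong\restensor_\nu\localpihat$, and $\OurChar=\prod_\nu\locOurChar$ factors through its local components, one obtains an identification
$$\Hom{\GA\times\HA}{\Weil{2n}\tensor\pihat}{1_{\GA}\tensor\OurChar}\;\cong\;\bigotimes_\nu{}'\;\Hom{\locGF\times\locHF}{\locWeil{2n}\tensor\localpihat}{1_{\locGF}\tensor\locOurChar}.$$
Non-vanishing of the left-hand side, supplied by (2), forces each local factor on the right to be non-zero, as required.

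The main obstacle is the justification of the factorization in (3), since one has to be careful about what ``restricted tensor product of Hom-spaces'' means: at almost every place one must exhibit a canonical non-zero local pairing, produced by evaluating on the unramified data (the basic Schwartz vector $\localphinot$ together with the spherical vector of $\localpihat$). Once this distinguished element is in hand, any non-zero global $\TT$ can be expressed on decomposable test vectors $\phi=\restensor_\nu\localphi$ and $\overline{\pivarphi}=\restensor_\nu\overline{\local{\varphi_\pi}}$ as a convergent product of non-zero local pairings, so non-vanishing globally forces non-vanishing locally at every place.
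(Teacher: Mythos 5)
Your parts (1) and (2) are fine and are essentially the paper's own argument: replacing $\phi$ by $\Weil{2n}(\gnot,\hnot,1)\phi$ replaces the kernel $\thetafuncT{2n}(g,h,1)$ by $\thetafuncT{2n}(g\gnot,h\hnot,1)$, the shifts $h\mapsto h\inv{\hnot}$ and $g\mapsto g\inv{\gnot}$ in the quotients pull out $\inv{\OurChar}(\hnot)=\OurChar(\hnot)$, and (2) is immediate from $(***)$.

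The gap is in (3). You rest the local non-vanishing on a claimed isomorphism
\[
\Hom{\GA\times\HA}{\Weil{2n}\tensor\pihat}{1_{\GA}\tensor\OurChar}\;\cong\;\restensor_{\nu}\,\Hom{\locGF\times\locHF}{\locWeil{2n}\tensor\localpihat}{1_{\locGF}\tensor\locOurChar},
\]
but this is not an off-the-shelf fact in the present setting, and invoking it here is circular. The right-hand side is only defined once one has chosen a \emph{non-zero} distinguished vector in almost every local Hom space, so its very formulation presupposes the non-vanishing that (3) asserts (this is exactly your remark about ``evaluating on the unramified data'': to know that such an evaluation defines a non-zero local pairing you already need a local pairing). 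Moreover, the surjectivity implicit in your statement that a non-zero global $\TT$ ``can be expressed on decomposable test vectors as a convergent product of non-zero local pairings'' is precisely the non-trivial content of such a factorization; it would require a local multiplicity-one input (local Howe duality), which you never invoke, and note that $\locWeil{2n}\tensor\localpihat$ is not irreducible, so no Schur-type argument bounds the local Hom spaces for free. What is actually needed is much weaker and has a direct proof, which is the one the paper gives: choose decomposable data $\phi=\restensor_\nu\localphi$ and $\varphibar=\imath(\restensor_\nu u_\nu)$ with $\TT(\phi,\varphibar)\neq0$, fix a place $\nuo$, and set $\TTnuo(f,v)\eqdef\TT(\phi_f,\varphibar_v)$, where $\phi_f$ and $\varphibar_v$ are obtained by replacing only the $\nuo$-components of $\phi$ and of $\restensor_\nu u_\nu$ by $f$ and $v$. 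An element of $\locGFo\times\locHFo$, embedded in $\GA\times\HA$ with trivial components elsewhere, acts only on the $\nuo$-factors, so part (1) makes $\TTnuo$ equivariant with respect to $1_{\locGFo}\tensor\locOurCharo$, and $\TTnuo(\localo{\phi},u_{\nuo})=\TT(\phi,\varphibar)\neq0$. This yields (3) at every place with no factorization theorem at all; as written, your argument assumes a statement that is both stronger than needed and unproved.
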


   \begin{proof}
      Proof of (2).\\
      We have already demonstrated that the bilinear form defined in
      (***) is non-zero, therefore if we prove (1), then (2)
      follows.

      Proof of (1).\\
      $$\TT\in\Hom{\HA\times\GA}{\Weil{2n}\tensor\pihat}{1_{\GA}\tensor\OurChar}$$
      $$\iff\qquad\Tform{\Weil{2n}(\gnot,\hnot,1)\phi}{\pihat(\gnot)\pivarphibar}
      =\inv{\OurChar}(\hnot)\Tform{\phi}{\pivarphibar}$$

      \seteqi{\Tform{\Weil{2n}(\gnot,\hnot,1)\phi}{\pihat(\gnot)\pivarphibar}}

      \seteqii{\INPROD{\genthetaintT[\Weil{2n}(\gnot,\hnot,1)\phi]{g}{h}{h}}{\pivarphi(g\gnot)}}

      \seteqiii{\INPROD{\genthetaintT{g\gnot}{h\hnot}{h}}{\pivarphi(g\gnot)}}

      \seteqiiii{\inv{\OurChar}(\hnot)\cdot\INPROD{\genthetaintT{g\gnot}{h\hnot}{h\hnot}}{\pivarphi(g\gnot)}}

      \seteqiiiii{\inv{\OurChar}(\hnot)\cdot\INPROD{\genthetaintT{g}{h}{h}}{\pivarphi(g)}}

      \seteqiiiiii{\inv{\OurChar}(\hnot)\Tform{\phi}{\pivarphibar}}

      \seteqiiiiiii[\inbrackets{\tn{Since }\OurChar^2=1}]{\OurChar(\hnot)\Tform{\phi}{\pivarphibar}}

      \showeqiiiiiii{-20pt}

      Proof of (3).\\
      Choose a non-zero element
      $\TT\in\Hom{\HA\times\GA}{\Weil{2n}\tensor\pihat}{1_{\GA}\tensor\OurChar}$.
      Thus, there exists $\phi\in\glbSchwartz$ and
      $\pivarphibar\in\Vpihat$ such that
      $$\Tform{\phi}{\pivarphibar}\neq0$$

      \newcommand{\Vvo}{\nV_{\nu_{0}}}
      \newcommand{\Vpihatvo}{\nV_{\pihat_{\nuo}}}
      \newcommand{\phif}{\phi_f}
      \newcommand{\localphio}{\localo{\phi}}
      \newcommand{\phinuo}{\phi_{\nuo}}

      We may assume that $\phi=\otimes'_\nu\phi_\nu$ and
      $\pivarphibar=\imath(\otimes'_\nu u_\nu)$, where $\imath$ is
      an $\GA$ embedding
      $\embedmap{\imath}{\otimes'_\nu\nV_{\local{\pihat}}}{\cA_{\tn{cusp}}(\GAoverGF)}$ such that
      $\tn{im}(\imath)=\pibar$, since such elements span $\glbSchwartz$ and $\Vpihat$, respectively. Choose a place
      $\nuo$ and let $f\in S(\Vvo^{2n})$ and $v\in\Vpihatvo$. Let $\phif\in S(\nV_\fA^{2n})$ be the
      function $\tensor\localphi'$, where $\localphi'=\localphi$ for all $\nu\neq\nuo$, and
      $\localphio'=f$. Similarly, let $\varphibar_v=\imath(u_v)$ where $u_v=\tensor u'_v$ where $u_v=u_v'$ for
      $\nu\neq\nuo$ and $u_{\nuo}'=v$. Define $\TTnuo(f,v)=\TT(\phif,\varphibar_v)$. Then $\TTnuo(\phinuo,u_{\nuo})=\TT(\phi,\varphibar)\neq0$.
      Thus $\TTnuo$ is a non-zero element of
      $$\Hom{\locGFo\times\locHFo}{\locWeilo{2n}\tensor\localpihato}{1_{\locGFo}\tensor\locOurCharo}$$
      This is true for each place $\nuo$, and the lemma is proved.
   \end{proof}

   \renewcommand{\hilbsymb}[2]{\hilbsymbI{#1}{#2}{\localF}}
   According to Lemma \ref{lem:hom}
   $$\Hom{\locGF\times\locHF}{\locWeil{2n}\tensor\localpihat}{1_{\locGF}\tensor\locOurChar}\neq0.$$
   Since
   $$\Hom{\locGF\times\locHF}{\locWeil{2n}\tensor\localpihat}{1_{\locGF}\tensor\locOurChar}\cong
   \Hom{\locGF\times\locHF}{\locWeil{2n}}{\localpi\tensor\locOurChar}$$
   we get, that for all places $\nu$,
   $$\Hom{\locGF\times\locHF}{\locWeil{2n}}{\localpi\tensor\locOurChar}\neq0.$$
   For almost all $\nu$ we have $\locOurChar\equiv1$,
   therefore for almost all $\nu$,
   $$\Hom{\locGF\times\locHF}{\locWeil{2n}}{\localpi\tensor1_{\locHF}}\neq0,$$
   which means that $\localpi$ is obtained from $1_{\locHF}$ by
   local Howe duality with respect to $\localpsi$. According to
   local Howe duality (see Section \ref{subsec:howeduality}), if
   we show
   $$\Hom{\locGF\times\locHF}{\locWeil{2n}}{\localpi'\tensor1_{\locHF}}\neq0,$$
   for $\localpi'\in\irr{\locGF}$ then
   $$\localpi\cong\localpi'.$$

   \newcommand{\locSchwartz}{\nS(\V[\localF]^{2n})}
   Recall that B denotes the Borel subgroup of $\tn{Sp}_{4n}$.
   Let $\nr(\phi)(g)=\locWeil{2n}(g,1,1)\phi(0,\ldots,0)$. Then for
   every $g\in\locGF, b\in\localB, \phi\in\nS(\V[\localF]^{2n})$ we have
   $$\nr(\phi)(bg)=\delta(b)^\HALF\muo(b)\nr(\phi)(g)$$
   where
   $$\muo=\hilbsymb{\det(a)}{(-1)^n\det(\nT)}\abslv{\det(a)}^n\delta^{-\HALF}(b)\qquad,b=\uniy\aastr$$
   and $\delta=\delta_{\localB}$.\\
   Indeed,
   \seteqi{\nr(\phi)(bg)}

   \seteqii{\locWeil{2n}(bg,1,1)\phi(0,\ldots,0)}

   \seteqiii{\locWeil{2n}(b,1,1)(\locWeil{2n}(g,1,1))\phi(0,\ldots,0)}

   \seteqiiii[\inbrackets{\tn{Lemma }\ref{lem:weilformulas}}]{\locWeil{2n}(\uniy,1,1)\locWeil{2n}(\aastr,1,1)(\locWeil{2n}(g,1,1))\phi(0,\ldots,0)}

   \seteqiiiii{\ACHAR{\tn{Gr}(0,\ldots,0)\wn y}\locWeil{2n}(\aastr,1,1)(\locWeil{2n}(g,1,1))\phi(0,\ldots,0)}

   \seteqiiiiii[\inbrackets{\tn{Lemma }\ref{lem:weilformulas}}]{\locWeil{2n}(\aastr,1,1)(\locWeil{2n}(g,1,1))\phi(0,\ldots,0)}


   \seteqiiiiiii{\hilbsymb{\det(a)}{(-1)^{\frac{2n}{2}}\det(\nT)}\abslv{\det(a)}^{\frac{2n}{2}}\locWeil{2n}(g,1,1)\phi(0a,\ldots,0a)}

   \seteqiiiiiiii{\hilbsymb{\det(a)}{(-1)^n\det(\nT)}\abslv{\det(a)}^n\delta^{-\HALF}(b)\delta^{\HALF}(b)\locWeil{2n}(g,1,1)\phi(0,\ldots,0)}

   \seteqix{\muo(b)\delta^{\HALF}(b)\locWeil{2n}(g,1,1)\phi(0,\ldots,0)}

   \seteqx{\muo(b)\delta^{\HALF}(b)\nr(\phi)(g).}

   \Showeqx

   Therefore, we have shown that
   $$\nr(\phi)\in\Ind{\locGF}{\localB}{\muo}.$$
   We will now show that
   $$\nr\in\Hom{\locGF\times\locHF}{\locWeil{2n}}{(\Ind{\locGF}{\localB}{\muo})\tensor1_{\locHF}}.$$
   We must show that the following diagram commutes
   $$\commdiagi{\locSchwartz}{\Ind{\locGF}{\localB}{\muo}}{\nr}{\nr}{\locWeil{2n}(g,h,1)}{(\rho\otimes1_{\locHF})(g,h)}$$
   where $\rho$ denotes right translation.
   We have

   \seteqi{\nr(\locWeil{2n}(g,h,1)\phi)(\go)}

   \seteqii{\locWeil{2n}(\go,1,1)\locWeil{2n}(g,h,1)\phi(0,\ldots,0)}

   \seteqiii{\locWeil{2n}(\go g,h,1)\phi(0,\ldots,0)}

   \seteqiiii[\inbrackets{\tn{Lemma }\ref{lem:weilformulas}}]{\locWeil{2n}(1,h,1)\locWeil{2n}(\go g,1,1)\phi(0,\ldots,0)}

   \seteqiiiii{\locWeil{2n}(\go g,1,1)\phi(\inv{h}0,\ldots,\inv{h}0)}

   \seteqiiiiii{\locWeil{2n}(\go g,1,1)\phi(0,\ldots,0)}

   \seteqiiiiiii{(\rho\otimes1_{\locHF}(g,h))(\nr(\phi))(\go)}

   \Showeqiiiiiii

   Let
   $$\Vwave\eqdef\nr(\glbSchwartz)\IN\Ind{\locGF}{\locBF}{\muo}.$$
   Extend $\Vwave$ to a Jordan-H\"{o}lder series such that
   $$\onen{\IN}{\set{0}}{\V[i-1]}\IN\onetwon{\IN}{\V[i](=\Vwave)}{\V[i+1]}{\Ind{\locGF}{\locBF}{\muo}}.$$
   Therefore $\V[j-1]\backslash\V[j]$ is an irreducible constituent of
   $\Ind{\locGF}{\locBF}{\muo}$ for all $j$, and we have
   $$\Hom{\locGF\times\locHF}{\locWeil{2n}}{(\V[j-1]\backslash\V[j])\tensor1_{\locHF}}\neq0\quad\tn{for some }1\leq j\leq i.$$
   Therefore, by local Howe duality, we have
   $$\localpi\cong\V[j-1]\backslash\V[j].$$
   Let us return to the description of $\muo$. Write $b\in\localB$
   as before and write
   $$a=\begin{pmatrix}
  t_1    &           & \mathbf{*} \\
         & \ddots    &            \\
         &           & t_{2n}
\end{pmatrix}.$$

   Then
 \renewcommand{\ti}[2]{\abslv{t_{#1}}^{#2}}

   \seteqi{\muo(b)}

   \seteqii{\hilbsymb{\det(a)}{(-1)^n\det(\nT)}\abslv{\det(a)}^n\delta^{-\HALF}(b)}

   \seteqiii{\hilbsymb{t_1t_2\cdots t_{2n}}{(-1)^n\t_1\t_2\cdots\t_{2n}}\abslv{t_1t_2\cdots t_{2n}}^n\delta^{-\HALF}(b)}

   \seteqiiii{\hilbsymb{t_1t_2\cdots t_{2n}}{(-1)^n\t_1\t_2\cdots\t_{2n}}\abslv{t_1t_2\cdots t_{2n}}^n(\ti{1}{4n}\cdot\ti{2}{4n-2}\cdot\ti{3}{4n-4}\cdots\ti{2n-1}{4}\ti{2n}{2})^{-\HALF}}

   \seteqiiiii{\hilbsymb{t_1t_2\cdots t_{2n}}{(-1)^n\t_1\t_2\cdots\t_{2n}}\prod_{i=1}^{2n}\ti{i}{n}\prod_{i=1}^{2n}\ti{i}{-(2n-i+1)}}

   \seteqiiiiii{\hilbsymb{t_1t_2\cdots t_{2n}}{(-1)^n\t_1\t_2\cdots\t_{2n}}\prod_{i=1}^{2n}\ti{i}{n-(2n-i+1)}}

   \seteqiiiiiii{\hilbsymb{t_1t_2\cdots t_{2n}}{(-1)^n\t_1\t_2\cdots\t_{2n}}\prod_{i=1}^{2n}\ti{i}{-n+i-1}}

   \Showeqiiiiiii

   $$\therefore\muo(b)=\prod_{i=1}^{2n}\ti{i}{-n+i-1}\cdot\hilbsymb{t_1t_2\cdots t_{2n}}{(-1)^n\det\nT}\quad\tn{for almost all }\nu$$
   therefore, for almost all $\nu$, the local component $\localpi$
   is a constituent of $\Ind{\locGF}{\localB}{\unramnu}\chinu_{\nT}$ where
    $$\s=(-n,-n+1,-n+2,\ldots,-1,0,1,2,\dots,n-3,n-2,n-1)$$
   and where $\unramnu$ and $\chinu_\nT$ are as given in the statement
   of the theorem.

%
\end{proof}

We will now prove Theorem \ref{TH:IsCAP}.
\begin{proof}[\textit{Proof of Theorem \ref{TH:IsCAP}}]
   According to Theorem \ref{TH:ThetaRepCuspidal}
   $\ThetaNcuspT{2n}{\OurChar}$ is an irreducible, automorphic,
   cuspidal representation of $\GA$ and according to Theorem
   \ref{TH:ExplicitCon}, for almost all $\nu$ the local component $\localpi$ is a constituent of
   $\Ind{\locGF}{\localB}{(\unramnu\cdot\chinu_\nT)}$ where
   $\s=(-n,-n+1,-n+2,\ldots,-1,0,1,2,\dots,n-3,n-2,n-1)$. Thus, $\pi$ is a CAP
   representation with respect to the Borel subgroup of $\GA$
   and $\tau=\unram\chi_\nT$ at almost all places $\nu$.
\end{proof}

\renewcommand{\hilbsymb}[2]{\hilbsymbI{#1}{#2}{\F}}

\newpage


\newpage

\vspace{0.5cm}
\noindent
{\scshape{School of Mathematics, Tel Aviv university, Tel Aviv 69978, Israel}}
\\
\textit{E-mail address:} erezron@tauex.tau.ac.il

\end{document}